\newcommand{\N}{{\mathds{N}}}
\newcommand{\R}{{\mathds{R}}}
\newcommand{\C}{{\mathds{C}}}
\newcommand{\D}{{\mathfrak{D}}}
\newcommand{\A}{{\mathfrak{A}}}
\newcommand{\B}{{\mathfrak{B}}}
\newcommand{\Lip}[1][L]{{\mathsf{#1}}}
\newcommand{\TLip}{{\mathsf{T}}}
\newcommand{\Hilbert}[1][H]{{\mathscr{#1}}}
\newcommand{\dpropinquity}[1]{{\mathsf{\Lambda}^\ast_{#1}}}
\newcommand{\spectralpropinquity}[1]{{\mathsf{\Lambda}^{\mathsf{spec}}_{#1}}}
\newcommand{\Kantorovich}[1]{{\mathsf{mk}_{#1}}}
\newcommand{\Haus}[1]{{\mathsf{Haus}\!\left[{#1}\right]\,}}
\newcommand{\LipschitzD}{{\mathsf{LipD}}}
\newcommand{\StateSpace}{{\mathscr{S}}}
\newcommand{\qcms}{quantum compact metric space}
\newcommand{\sa}[1]{{\mathfrak{sa}\left({#1}\right)}}
\newcommand{\inner}[3]{{\left\langle{#1},{#2}\right\rangle_{#3}}}
\newcommand{\dom}[1]{{\operatorname*{dom}\left({#1}\right)}}
\newcommand{\diam}[2]{{\mathrm{diam}\left({#1},{#2}\right)}}
\newcommand{\qdiam}[2]{{\mathrm{qdiam}\left({#1},{#2}\right)}}
\newcommand{\norm}[2]{\left\|{#1}\right\|_{#2}}
\newcommand{\CDN}{{\mathsf{DN}}}
\newcommand{\TDN}{{\mathsf{TN}}}
\newcommand{\worknote}[1]{}
\newcommand{\opnorm}[3]{{\left|\mkern-1.5mu\left|\mkern-1.5mu\left| {#1} \right|\mkern-1.5mu\right|\mkern-1.5mu\right|_{#3}^{#2}}}
\newcommand{\tunnelextent}[1]{{\chi\left({#1}\right)}}
\newcommand{\tunnelsep}[3]{{\mathsf{sep}_{#3}\left({#2}\middle\vert{#1}\right)}}
\newcommand{\alg}[1]{{\mathfrak{#1}}}
\newcommand{\module}[1]{{\mathscr{#1}}}
\newcommand{\dil}[1]{{\mathrm{dil}\left({#1}\right)}}
\newcommand{\spectrum}[1]{\mathrm{Sp}\left({#1}\right)}
\newcommand{\ModState}[1]{\widehat{\StateSpace}}
\renewcommand{\geq}{\geqslant}
\renewcommand{\leq}{\leqslant}
\newcommand{\Dirac}[1][D]{{\slashed{#1}}}
\theoremstyle{plain}
\newtheorem{theorem}{Theorem}[section]
\newtheorem*{theorem*}{Theorem}
\newtheorem{corollary}[theorem]{Corollary}
\newtheorem{lemma}[theorem]{Lemma}
\newtheorem{theorem-definition}[theorem]{Theorem-Definition}
\newtheorem{hypothesis}[theorem]{Hypothesis}
\theoremstyle{definition}
\newtheorem{definition}[theorem]{Definition}
\newtheorem{notation}[theorem]{Notation}
\theoremstyle{remark}
\newtheorem{remark}[theorem]{Remark}
\numberwithin{equation}{section}
\begin{document}

\title[]{Continuity for the spectral propinquity of the Dirac operators associated with an analytic path of Riemannian metrics}

\author{Carla Farsi}
\email{carla.farsi@colorado.edu}
\address{Department of Mathematics \\ University of Colorado \\ Boulder CO 80309-0395}

\author{Fr\'{e}d\'{e}ric Latr\'{e}moli\`{e}re}
\address{Department of Mathematics \\ University of Denver \\ Denver CO 80208}
\email{frederic@math.du.edu}
\urladdr{http://www.math.du.edu/\symbol{126}frederic}

\date{\today}
\subjclass[2000]{Primary:  46L89, 46L30, 58B34.}
\keywords{Noncommutative metric geometry, isometry groups, quantum Gromov-Hausdorff distance, inductive limits of C*-algebras, Monge-Kantorovich distance, Quantum Metric Spaces, Spectral Triples, compact C*-metric spaces, AF algebras, twisted group C*-algebras.}

\begin{abstract}
 We prove that a polynomial  path of Riemannian metrics on a closed spin manifold induces a continuous field in the spectral propinquity of metric spectral triples. 
\end{abstract}
\maketitle

\tableofcontents

\section{Introduction}

The study  of the dependence  of  the eigenvalues of classical  operators such as the Laplacian and the  Dirac on the metric in the setting of a  closed orientable or spin manifold is an important  problem that has seen a lot of recent interest; see  \cite{Berger73, Bando-Urakawa} for some of the earliest work. In this paper we concentrate on the Dirac operator.

Dirac operators are important in both physics when gravity, i.e.,  the space-time metric, is coupled with other interactions,  as well as in  mathematics, where the Dirac operator serves as a tool in Riemannian geometry.
Some of the motivation for studying the dependence of the Dirac operator on the metric comes from Selberg-Witten theory, and paths of metrics  realize  \lq spin geometry in motion.' Calculating the spectrum of the Dirac operator can be a very difficult problem, but in the case of an analytic 
path of metrics, the dependence of the spectrum on the parameter of the path takes the form of a continuous field of eigenvalues and eigenvectors. 
In the seminal paper  \cite{Bo-Gau},  J.-P. Bourguignon and P. Gauduchon, building on work of Y. Kosmann, were the first to  construct a  geometric process to compare spinors for different metrics on a closed spin manifold. (See also \cite{Bar-96} for a different approach that extends beyond the Riemanian case) This made possible the comparison of   Dirac operators associated to different metrics  as they act on these changing  spinor bundles. Importantly, by  collecting the data to be represented on a single Hilbert space  one gets  a holomorphic family of self-adjoint operators of type (A) as in \cite[VII, $\S 2$]{Kato}. 
Fixed two metrics on a closed manifold,  J.-P. Bourguignon and P. Gauduchon in  \cite{Bo-Gau}, defined  an isometry  of associated spinor Hilbert spaces. By using this isometry, it became possible  to transfer the Dirac operators associated to different metrics, which are   defined on different Hilbert spaces, onto the same Hilbert space; 
see also  \cite{Binz83} for precursor's work.
By using  the seminal paper \cite{Bo-Gau}, 
important results on eigenvalues and eigenspaces of the Dirac operator on closed spin manifolds were proven,
\cite{Nowaczyk2013, Nowaczyk2016, NowaczykThesis,Maier}.

As a starting point,  C. B\"ar proved  in \cite[Proposition 7.1]{Bar-96} that a bounded spectral interval of the Dirac operator can be described locally by continuous functions. 
Many global results were inspired by C. B\"ar's result. 
A. Hermann made connections with Kato's perturbation theory by showing  in his thesis \cite[Lemma A.0.12]{Hermann12} 
that an analytic   path of metrics in a closed   spin manifold gives rise to a \emph{holomorphic family of self-adjoint operators of type (A)} \cite[VII,$\S$]{Kato} of \lq translated' Dirac operators all defined on the same Hilbert space and with common domain; see also  \cite{Maier},   \cite{Nowaczyk2013}, \cite{Nowaczyk2016}.
 A special case of this instance is when the analytic  path of metrics is polynomial, or even simpler, a straight-line path cf. \cite{Bo-Gau}, \cite[Proof of Theorem 4.14]{Nowaczyk2016},  \cite[Section 2.3]{Maier},  and \cite[Page 950]{Canzani14}.
 See also\cite{Lesch05}, \cite{Bandara18}, \cite{Ben-Artzi16} for recent results using other metrics, and \cite{Kriegl03} for other types of families of operators.

Several other authors described different aspects of the dependence of the Dirac operator and its eigenvalues and eigenspaces on the metric, as well as variational aspects and interactions with diffeomorphisms groups; see e.g.  \cite{Perez08},  \cite{Ammann16}, \cite{Dabrow86}, \cite{Dabrow13},  \cite{Muller17}, \cite{Karpukhin24}.
	
Spectral triples have emerged in recent times as 
powerful tools to encode geometric data such as classical  operators together with their action  on associated Hilbert spaces. Of particular relevance the  spectral triple associated to a Dirac operator on a closed spin manifold; in this case the elements of the spectral triple are:  1. the C*-algebra of the continuous functions on the manifold; 2. the Hilbert space of the $L^2$-sections of the spinor bundle; and 3.  the Dirac operator.

More in general, for metric spectral triples  (which are spectral triples which induce the weak*-topology on the state space of their C*-algebra) Latr\'emoli\`ere has developed a distance, called \emph{spectral propinquity}  for which distance zero is equivalent to \lq unitarily equivalent' 	\cite{Latremoliere18g, Latremoliere22}. Latr\'emoli\`ere's spectral propinquity was based on the propinquity for quantum compact metric spaces, C*-modules, and many other structures \cite{Latremoliere13b, Latremoliere16b, Latremoliere16c, Latremoliere18b, Latremoliere18c, Latremoliere18d, Latremoliere18g}. 
In addition, when a sequence of metric spectral triples converges to a spectral triple
 in the spectral propinquity, this convergence also implies 
convergence of the bounded functional calculus, and in particular convergence of the eigenvalues, see \cite[Theorem 5.2]{Latremoliere22} for details.

This paper focuses on the study of 
the  dependence of the Dirac operator on the metric
using  Latr\'emoli\`ere's  spectral propinquity framework.
One of the consequences of our main result, Theorem \eqref{main-thm},
is the  following theorem, which provides  a converse of \cite[Theorem 5.2]{Latremoliere22}. This theorem says that  the holomorphic family of self-adjoint operators associated to a polynomial path of metrics  on a closed connected spin manifold give rise to a 
continuous  family in the   spectral propinquity.

\begin{theorem*}(See Theorem \eqref{Riemannian-cv-thm})
	Let $M$ be a closed connected spin manifold. If $t\in I \mapsto g(t)$ is a polynomial path of $C^\infty$ Riemannian metrics over $M$, then 
	$t\in I\rightarrow (C(M),\Gamma^2 \mathrm{Spin}_{g(t)},\Dirac_t)$ is a continuous function for the spectral propinquity.
\end{theorem*}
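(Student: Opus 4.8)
\emph{Proof proposal.} The plan is to deduce this statement from the paper's main result, Theorem~\eqref{main-thm}, by realizing all the Dirac operators $\Dirac_t$ on a single Hilbert space through the Bourguignon--Gauduchon identification, and then checking that the resulting family of metric spectral triples satisfies the hypotheses of that theorem uniformly over compact subintervals of $I$. Fix $t_0 \in I$. By the construction of \cite{Bo-Gau} (see also \cite{Hermann12}), for $t$ in a neighborhood of $t_0$ there is a unitary $U_t \colon \Gamma^2 \mathrm{Spin}_{g(t)} \to \Gamma^2 \mathrm{Spin}_{g(t_0)}$ arising from a fiberwise isometry of spinor bundles covering $\mathrm{id}_M$. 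Two features are essential. First, since $U_t$ is induced fiberwise and $C(M)$ acts by multiplication on sections in every $\Gamma^2 \mathrm{Spin}_{g(t)}$, the unitary $U_t$ intertwines the representations of $C(M)$; hence the conjugated spectral triple $(C(M), \Gamma^2 \mathrm{Spin}_{g(t_0)}, \widetilde{\Dirac}_t)$ with $\widetilde{\Dirac}_t := U_t \Dirac_t U_t^\ast$ has the \emph{same} C*-algebra and representation for all $t$, only the Dirac operator varying. Second, because $t \mapsto g(t)$ is polynomial, hence real-analytic, the operators $\widetilde{\Dirac}_t$ form a holomorphic family of self-adjoint operators of type (A) in the sense of \cite[VII, \S 2]{Kato}: they share the common first-Sobolev-space domain of $\widetilde{\Dirac}_{t_0}$, and $t \mapsto \widetilde{\Dirac}_t \xi$ is analytic for each $\xi$ in that domain. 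In particular $\widetilde{\Dirac}_t \to \widetilde{\Dirac}_s$ in the norm-resolvent sense as $t \to s$, with quantitative control.

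Next I would verify the remaining hypotheses of Theorem~\eqref{main-thm} with constants uniform on a compact subinterval $J \ni t_0$. Each $(C(M), \Gamma^2\mathrm{Spin}_{g(t)}, \Dirac_t)$ is a metric spectral triple by Connes' theorem, since $M$ is closed and connected, and this property is preserved under conjugation by $U_t$. The quantitative inputs — a uniform bound on the commutator seminorms $f \mapsto \|[\widetilde{\Dirac}_t, f]\|$ over a fixed Lipschitz ball, and a uniform modulus for the weak*-topology on the state space of $C(M)$ — follow from the identity $[\widetilde{\Dirac}_t, f] = U_t\, c_{g(t)}(df)\, U_t^\ast$, Clifford multiplication by $df$ for the metric $g(t)$, whose norm equals $\sup_{x \in M} |df_x|_{g(t),x}$ and depends continuously, indeed rationally with positive-definite denominator, on $t$. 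One therefore compares the Lip-norms at $t$ and $t_0$ by a factor tending to $1$ as $t \to t_0$, which yields the required uniform metric control.

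With these uniformities in place, the continuity at $t_0$ of $t \mapsto (C(M), \Gamma^2\mathrm{Spin}_{g(t)}, \Dirac_t)$ in the spectral propinquity is exactly the conclusion of Theorem~\eqref{main-thm} applied to the family $\{\widetilde{\Dirac}_t\}_{t \in J}$ on the fixed Hilbert space $\Gamma^2\mathrm{Spin}_{g(t_0)}$ with fixed representation of $C(M)$. Since $t_0 \in I$ and $J$ were arbitrary, continuity holds on all of $I$.

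The main obstacle I anticipate is not the convergence of the Dirac operators themselves — that is furnished essentially off the shelf by the analytic perturbation theory of \cite{Bo-Gau} and \cite{Hermann12} — but rather feeding this operator-theoretic convergence correctly into the metric side of the spectral propinquity. Concretely, the delicate points are establishing the \emph{uniform} metric control (uniform Lipschitz-ball bounds and a uniform state-space modulus) demanded by Theorem~\eqref{main-thm}, and checking that the Bourguignon--Gauduchon unitaries genuinely respect the $C(M)$-module structure so that transferring the representation introduces no additional error term. Once these are secured, the theorem is a direct application of the general continuity result.
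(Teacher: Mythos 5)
Your overall route is the same as the paper's: transfer each $\Dirac^t$ to a fixed Hilbert space via the Bourguignon--Gauduchon unitaries, note that these intertwine the $C(M)$-representations and yield a holomorphic family of self-adjoint operators of type (A), and then invoke Theorem~\eqref{main-thm}. The genuine gap is in how you feed this into that theorem. Theorem~\eqref{main-thm} does not take ``norm-resolvent convergence with quantitative control'' or ``a uniform commutator bound over a Lipschitz ball plus a uniform state-space modulus'' as hypotheses; its actual hypotheses are (1) the existence, for small $t$, of tunnels of the specific form $(\A\oplus\A,\TLip,j_1,j_2)$ with $\TLip(a,b)=\max\{\Lip_0(a),\Lip_t(b),\frac{2}{\varepsilon}\norm{a-b}{\A}\}$, and (2) the existence of \emph{continuous} fields $t\mapsto\lambda_n(t)\in\R$ and $t\mapsto e_n(t)\in\Hilbert$ such that $(e_n(t))_{n\in\N}$ is a Hilbert basis of eigenvectors with $\Dirac_t e_n(t)=\lambda_n(t)e_n(t)$, together with the local stability of the spectral count $|\spectrum{\Dirac_t}\cap[-\Lambda,\Lambda]|=|\spectrum{\Dirac_0}\cap[-\Lambda,\Lambda]|$ for $t$ small. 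You never verify (2), and it is the heart of the matter: the proof of Theorem~\eqref{main-thm} uses exactly these continuous eigenvalue/eigenvector fields and the counting condition (via Lemma~(\ref{constant-N-lemma}) and Lemma~(\ref{uniform-truncation-lemma})) to get a uniform finite-rank truncation of the graph-norm unit balls. In the paper this hypothesis is supplied by Kato's theorem for holomorphic families of type (A) (\cite[Section VII \S 5, Theorem 3.9]{Kato}), which gives continuously (indeed analytically) parametrized eigenvalues and eigenvectors, together with \cite[Theorem 2.2]{Nowaczyk2013} for the spectral counting condition; neither is a formal consequence of the type (A) property or norm-resolvent continuity alone, so both must be cited or proved.

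Concerning hypothesis (1), your idea is essentially right but left incomplete: the comparison of the commutator seminorms (equivalently, of the Lipschitz seminorms for $g(t)$ and $g(0)$) with ratio tending to $1$ is what the paper establishes through the estimate $|\inner{X}{X}{g(t),x}-\inner{X}{X}{g(0),x}|\leq C t\inner{X}{X}{g(0),x}$ for polynomial paths (Lemma~(\ref{diff-poly-path-lemma}) and Lemma~(\ref{Lipschitz-implies-Lipschitz-lemma})), and it then converts Lipschitz-distance convergence into tunnels of the required shape via Remark~(\ref{base-tunnel-remark}), citing \cite[Proposition 3.80]{Latremoliere15}. You should make that last conversion explicit, since Theorem~\eqref{main-thm} demands tunnels of that exact form, not merely closeness of Lip-norms; with (2) supplied and this step made precise, your argument coincides with the paper's.
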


Continuous fields of \qcms s and  of metric spectral triples depending on a parameter have been considered in several papers in the literature, see e.g.  \cite{Aguilar15, Li05, Kaad21}, 
and our results complement and extend material already available.

As we mentioned, the second author constructed a distance called the \emph{spectral propinquity} on the space of metric spectral triples; we will now review its construction. The spectral propinquity is a distance up to unitary equivalence. Moreover, in appropriate sense, both spectra and bounded continuous functional calculi for the Dirac operators of metric spectral triples are continuous with respect to the spectral propinquity \cite{Latremoliere22}. In this paper, we will see a form of converse when the continuity of the spectrum and of eigenvectors for a family of metric spectral triples over a fixed base implies, in specific cases, the contintuity of that family for the spectral propinquity.

The \emph{spectral propinquity} between two metric spectral triples 
$(\A, \Hilbert_\A, \Dirac_\A)$ and $(\B, \Hilbert_\B, \Dirac_\B)$ is computed in three steps. First, we compute an upper bound for the propinquity between the underlying {\qcms s} $(\A,\opnorm{[\Dirac_\A,\cdot]}{}{\Hilbert_\A})$ and $(\B,\opnorm{[\Dirac_\B,\cdot]}{}{\Hilbert_\B})$. To this end, we define a \emph{tunnel} $\tau=(\D,\Lip_\D,\rho_\A,\rho_\B)$ as a {\qcms} $(\D,\Lip_\D)$, and two quantum isometries $\rho_\A:(\D,\Lip_\D)\rightarrow (\A,\opnorm{[\Dirac_\A,\cdot]}{}{\Hilbert_\A})$, $\rho_\B:(\D,\Lip_\D)\rightarrow(\B,\opnorm{[\Dirac_\B,\cdot]}{}{\Hilbert_\B})$. Given such a tunnel, we define its extend as:
\begin{equation*}
	\tunnelextent{\tau} \coloneqq \max\{ \Haus{\Kantorovich{\Dirac_\A}}(\StateSpace(\D),\rho_\A^\ast(\StateSpace(\A))), \Haus{\Kantorovich{\Dirac_\B}}(\StateSpace(\D),\rho_\B^\ast(\StateSpace(\B))) \} \text.
\end{equation*}
The extent for any such tunnel is an upper bound for the propinquity between $(\A,\opnorm{[\Dirac_\A,\cdot]}{}{\Hilbert_\A})$ and $(\B,\opnorm{[\Dirac_\B,\cdot]}{}{\Hilbert_\B})$, which is indeed defined by:
\begin{multline*}
	\dpropinquity{}((\A,\opnorm{[\Dirac_\A,\cdot]}{}{\Hilbert_\A}),(\B,\opnorm{[\Dirac_\B,\cdot]}{}{\Hilbert_\B})) \coloneqq \\ \inf\left\{ \tunnelextent{\tau} : \tau \text{ tunnel from $(\A,\opnorm{[\Dirac_\A,\cdot]}{}{\Hilbert_\A})$ to $(\B,\opnorm{[\Dirac_\B,\cdot]}{}{\Hilbert_\B}$ }) \right\} \text.
\end{multline*}

Now, as our second step, to account for the actions of the C*-algebras on Hilbert spaces in spectral triples, we restrict ourselves to tunnels which are obtained from diagrams of the form:
\begin{equation*}
 \xymatrix{
       & (\D,\Lip) \ar@{->>}[ddl]_{\rho_\A} \ar@{->>}[ddr]^{ \rho_\B} & \\
        & (\module{E},\TDN)  \ar@{->>}[ddl]^{\Pi_\A} \ar@{->>}[ddr]_{\Pi_\B} \ar[d]_{\inner{\cdot}{\cdot}{}} \ar@(ul,ur) & \\
        {(\A,\opnorm{[\Dirac_\A,\cdot]}{}{\Hilbert_\A})}  & (\alg{E},\Lip_{\alg{E}}) \ar@{->>}@/^/[ddl]^{\pi_\A} \ar@{->>}@/_/[ddr]_{ \pi_\B} &  {(\B,\opnorm{[\Dirac_\B,\cdot]}{}{\Hilbert_\B})} \\
         (\Hilbert_\A,\norm{\cdot}{\Hilbert_\A} + \norm{\Dirac_\A\cdot}{\Hilbert_\A})\ar[u]^{\inner{\cdot}{\cdot}{}} \ar@(dr,dl) &  & (\Hilbert_\B,\norm{\cdot}{\Hilbert_\B}+\norm{\Dirac_\B\cdot}{\Hilbert_\B}) \ar[u]^{\inner{\cdot}{\cdot}{}} \ar@(dr,dl) & \\
      {(\C,0)}  & & {(\C,0)}  
      }
\end{equation*}
We call such a diagram a \emph{metrical tunnel}. Notably, such a metrical tunnel gives rise to \emph{two} tunnels (the top and the bottom of the diagram): one between $(\A,\opnorm{[\Dirac_\A,\cdot]}{}{\Hilbert_\A})$ and $(\B,\opnorm{[\Dirac_\B,\cdot]}{}{\Hilbert_\B})$, namely $(\D,\Lip_\D,\rho_\A,\rho_\B)$, and also one from $(\C,0)$ to $(\C,0)$ given by $(\alg{E},\Lip_{\alg{E}},\pi_\A, \pi_\B)$. The maximum of the extent of these two tunnels is called the extent for the metrical tunnel. 

Now, our third step is to account for the Dirac operators of the spectral triples in the computation of our distance. This is done by using the group actions induced by these Dirac operators and apply the covariant form of the metrical propinquity. As our actions here are very particular, we can simplify the general presentation to the following. We define, for the metrical tunnel $\tau$ above and any $\varepsilon > 0$:
 	\begin{multline*}
 		\tunnelsep{\tau}{\Dirac_\A,\Dirac_\B}{\varepsilon} \coloneqq \Haus{K_{\varepsilon}} \Big( \Pi_\A^\ast\left\{\xi\in\dom{\Dirac_\A}:\norm{\xi}{\Hilbert_\A}+\norm{\Dirac_\A\xi}{\Hilbert_\A} \leq 1 \right\},  \\ \Pi_\B^\ast\left\{\eta\in\dom{\Dirac_\B}:\norm{\eta}{\Hilbert_\B}+\norm{\Dirac_\B\eta}{\Hilbert_\B}  \leq 1 \right\} \Big)
 	\end{multline*}
where
	\begin{equation*}
 		K_\varepsilon(\xi,\eta) \coloneqq \sup_{\substack{0\leq t \leq \frac{1}{\varepsilon} \\ \TDN(\omega)\leq 1}} \left\{ \left| \inner{\exp( i t \Dirac_\A )\xi}{\omega}{\Hilbert_\A} - \inner{\exp( i t \Dirac_\B )\eta}{\omega}{\Hilbert_\B} \right| \right\}\text.
 	\end{equation*}
  	
 The \emph{spectral propinquity} $\spectralpropinquity{}((\A,\Hilbert_\A,\Dirac_\A),(\B,\Hilbert_\B,\Dirac_\B))$ between two metric spectral triples $(\A,\Hilbert_\A,\Dirac)$ and $(\B,\Hilbert_\B,\Dirac_\B)$ is:
 		\begin{equation*}
 			\inf\Big\{ \varepsilon > 0 : \exists \tau \text{ tunnel from }(\A,\Hilbert_\A,\Dirac_\A)  \text{ to }(\B,\Hilbert_\B,\Dirac_\B) \,
 			\hbox{ with }\, \max\{\tunnelextent{\tau}, \tunnelsep{\tau}{\Dirac_\A,\Dirac_\B}{\varepsilon} \} < \varepsilon \Big\}\text.
 		\end{equation*}
 
The  spectral propinquity $\spectralpropinquity{}$ is a metric up to unitary equivalence on the space of metric spectral triples., and the spectrum of the Dirac operators and the continuous functional calculus are in some sense continuous with respect to the spectral propinquity, see  \cite{Latremoliere22}.

\section*{Acknowledgments} This research was partly supported by the  Simon Foundation Collaboration Grant MPS-TSM-00007731 (C.F.). 
'

\section{The main results}

This paper was motivated  by the following    natural geometric question: on a connected closed spin manifold   how can one   reframe the dependence of the Dirac operator 
on the metric by using the spectral propinquity.
We actually address this question in the more general context of fields of spectral triples and associated eigenvalues/eigenvectors. 
Our results provide a very partial converse to some of the results in \cite{Latremoliere22}, in the sense that we use  the continuity of spectra to obtain convergence for the propinquity. This  in turn implies  additional results on the  continuous functional calculus, see \cite[Theorem 4.7, Corollary 4.8, Theorem 4.9]{Latremoliere22}.

Our main result is the  following theorem, which provides  a converse of \cite[Theorem 5.2]{Latremoliere22}. This theorem says that  continuous families of self-adjoint operators  satisfying certain hypotheses give rise to 
continuous  families in the   spectral propinquity.

\begin{theorem}\label{main-thm}
	Let $\A$ be a unital separable C*-algebra acting on a Hilbert space $\Hilbert$. Assume that for each $t\in [0,1]$, we are given a metric spectral triple $(\A,\Hilbert,\Dirac_t)$ such that the following properties hold:
	\begin{enumerate}
		\item for all $\varepsilon > 0$, there exists $\delta>0$ such that, if $t\in[0,\delta)$, there exists a tunnel  from  $(\A,\Hilbert,\Dirac_0)$ to $(\A,\Hilbert,\Dirac_t)$ of the form $\tau_{\varepsilon,t} \coloneqq (\A\oplus\A,\TLip,j_1,j_2)$, where $j_1$ and $j_2$ are the canonical surjections on the first and second summands respectively, and
		\begin{equation*}
			\TLip(a,b)\coloneqq \max\left\{ \Lip_0(a), \Lip_t(b), \frac{2}{\varepsilon}\norm{a-b}{\A} \right\}\text;
		\end{equation*}
		\item there exist a sequence $(\lambda_n)_{n\in\N}$ of continuous functions from $[0,1]$ to $\R$, and a sequence $(e_n)_{n\in\N}$ of continuous functions form $[0,1]$ to $\Hilbert$, such that:
		\begin{enumerate}
			\item $(e_n(t))_{n\in\N}$ is a Hilbert basis of $\Hilbert$ for all $t\in[0,1]$,
			\item $\Dirac_t e_n(t) = \lambda_n(t) e_n(t)$ for all $n\in\N$, $t\in[0,1]$,
			\item for all $\Lambda>0$, there exists $\delta>0$ such that, for all $t\in [0,\delta)$, we have \begin{equation*}
			|\spectrum{\Dirac_t}\cap[-\Lambda,\Lambda]| = |\spectrum{\Dirac_0}\cap[-\Lambda,\Lambda]|\text.
			\end{equation*}
		\end{enumerate} 
	\end{enumerate}
	Then the spectral triples $(\A,\Hilbert,\Dirac_t)_{t\in [0,1]} $ converge to $(\A,\Hilbert,\Dirac_0)$ as $t\to 0$ in the spectral propinquity:
	\begin{equation*}
		\lim_{t\rightarrow 0} \spectralpropinquity{}((\A,\Hilbert,\Dirac_t),(\A,\Hilbert,\Dirac_0)) = 0 \text.
	\end{equation*}
\end{theorem}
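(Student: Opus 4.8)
The plan is: given $\varepsilon>0$, to construct for all $t$ in a neighborhood of $0$ a metrical tunnel $\tau$ from $(\A,\Hilbert,\Dirac_0)$ to $(\A,\Hilbert,\Dirac_t)$ with $\max\{\tunnelextent{\tau},\tunnelsep{\tau}{\Dirac_0,\Dirac_t}{\varepsilon}\}<\varepsilon$; by the definition of the spectral propinquity this forces $\spectralpropinquity{}((\A,\Hilbert,\Dirac_t),(\A,\Hilbert,\Dirac_0))\leq\varepsilon$, and since $\varepsilon>0$ is arbitrary the theorem follows. For the base of $\tau$ I would take the quantum-metric tunnel $\tau_{\varepsilon,t}=(\A\oplus\A,\TLip,j_1,j_2)$ of hypothesis~(1). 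Over it I would place the module $\module{E}:=\Hilbert\oplus\Hilbert$, regarded as a Hilbert module over $\C\oplus\C$ on which $\A\oplus\A$ acts diagonally, equipped with the D-norm
\[
\TDN(\xi,\eta):=\max\left\{\norm{\xi}{\Hilbert}+\norm{\Dirac_0\xi}{\Hilbert},\ \norm{\eta}{\Hilbert}+\norm{\Dirac_t\eta}{\Hilbert},\ \frac{4}{\varepsilon}\norm{\xi-\eta}{\Hilbert}\right\}\text{,}
\]
the coordinate projections $\Pi_\A,\Pi_\B\colon\module{E}\to\Hilbert$ onto the two copies of $\Hilbert$, carrying the graph norms of $\Dirac_0$ and $\Dirac_t$ respectively, the $(\C\oplus\C)$-valued inner product, and a trivial bottom layer joining $(\C,0)$ to $(\C,0)$. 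One then checks directly that $(\module{E},\TDN)$ is a metrized quantum vector bundle over $(\A\oplus\A,\TLip)$ and that $\Pi_\A,\Pi_\B$ are module quantum isometries; these verifications mirror those showing that $\tau_{\varepsilon,t}$ is a tunnel, the only new input being $\Dirac_u(a\xi)=[\Dirac_u,a]\xi+a\Dirac_u\xi$, so that $\norm{\Dirac_u(a\xi)}{\Hilbert}\leq\Lip_u(a)\norm{\xi}{\Hilbert}+\norm{a}{\A}\norm{\Dirac_u\xi}{\Hilbert}$ for $u\in\{0,t\}$.

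Next I would bound the extent of $\tau$, which is the larger of the extents of its two underlying quantum-metric tunnels. A state of $\A\oplus\A$ has the form $\psi\colon(a,b)\mapsto s\varphi_1(a)+(1-s)\varphi_2(b)$ with $\varphi_1,\varphi_2\in\StateSpace(\A)$, $s\in[0,1]$; comparing $\psi$ with $\varphi\circ j_1$ where $\varphi:=s\varphi_1+(1-s)\varphi_2\in\StateSpace(\A)$, and using that $\TLip(a,b)\leq1$ forces $\norm{a-b}{\A}\leq\varepsilon/2$, one gets $\psi(a,b)-\varphi(a)=(1-s)\big(\varphi_2(b)-\varphi_2(a)\big)$, hence $\Kantorovich{\TLip}(\psi,\varphi\circ j_1)\leq\varepsilon/2$; symmetrically for $j_2$, and the same kind of computation for the bottom tunnel. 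Hence $\tunnelextent{\tau}\leq\varepsilon/2<\varepsilon$.

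The heart of the argument is the bound $\tunnelsep{\tau}{\Dirac_0,\Dirac_t}{\varepsilon}<\varepsilon$, i.e., that the graph-norm unit balls of $(\Hilbert,\Dirac_0)$ and $(\Hilbert,\Dirac_t)$ are $\Haus{K_\varepsilon}$-close when coupled through $\module{E}$. I would fix $\Lambda>4/\varepsilon$ with $\Lambda\notin\spectrum{\Dirac_0}$, set $F:=\{n\in\N:|\lambda_n(0)|<\Lambda\}$ (finite, as the spectrum is discrete and unbounded), and note that by continuity of the finitely many $\lambda_n$ with $n\in F$ together with hypothesis~(2c) (the eigenvalue count in $[-\Lambda,\Lambda]$, with multiplicity, being locally constant) there is $\delta>0$ with $F=\{n:|\lambda_n(t)|<\Lambda\}$ for all $t\in[0,\delta)$. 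Given $\xi\in\dom{\Dirac_0}$ with $\norm{\xi}{\Hilbert}+\norm{\Dirac_0\xi}{\Hilbert}\leq1$, set $c_n:=\inner{\xi}{e_n(0)}{\Hilbert}$ and pair $\xi$ with $\eta:=\kappa_t\sum_{n\in F}c_n e_n(t)$, where $\kappa_t\in(0,1]$ depends only on $t$, tends to $1$ as $t\to0$, and is chosen so that $\norm{\eta}{\Hilbert}+\norm{\Dirac_t\eta}{\Hilbert}\leq1$ (possible because $\sum_{n\in F}|c_n|^2\leq\norm{\xi}{\Hilbert}^2$, $\sum_{n\in F}\lambda_n(0)^2|c_n|^2\leq\norm{\Dirac_0\xi}{\Hilbert}^2$, $F$ is finite, and $\lambda_n(t)\to\lambda_n(0)$). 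Writing $s$ for the time parameter in $K_\varepsilon$ (to avoid clash with the path parameter $t$), for $s\in[0,1/\varepsilon]$ and $\omega=(\omega_1,\omega_2)\in\module{E}$ with $\TDN(\omega)\leq1$ — so $\norm{\omega_i}{\Hilbert}\leq1$ and $\norm{\omega_1-\omega_2}{\Hilbert}\leq\varepsilon/4$ —
\[
\left|\inner{e^{is\Dirac_0}\xi}{\omega_1}{\Hilbert}-\inner{e^{is\Dirac_t}\eta}{\omega_2}{\Hilbert}\right|\;\leq\;\frac{\varepsilon}{4}\;+\;\norm{e^{is\Dirac_0}\xi-e^{is\Dirac_t}\eta}{\Hilbert}\text{,}
\]
and, decomposing $e^{is\Dirac_0}\xi-e^{is\Dirac_t}\eta$ into its tail in $\{e_n(0):n\notin F\}$, its $F$-part, and the $(1-\kappa_t)$-defect, the last norm is at most
\[
\Big(\sum_{n\notin F}|c_n|^2\Big)^{1/2}\;+\;|F|^{1/2}\max_{n\in F}\;\sup_{0\leq s\leq1/\varepsilon}\norm{e^{is\lambda_n(0)}e_n(0)-e^{is\lambda_n(t)}e_n(t)}{\Hilbert}\;+\;(1-\kappa_t)\text{,}
\]
uniformly in $\xi$. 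The first term is $\leq\Lambda^{-1}\norm{\Dirac_0\xi}{\Hilbert}\leq\Lambda^{-1}<\varepsilon/4$ (since $|\lambda_n(0)|\geq\Lambda$ off $F$); for each fixed $n$ the inner supremum in the second term is $\leq\norm{e_n(t)-e_n(0)}{\Hilbert}+\varepsilon^{-1}|\lambda_n(t)-\lambda_n(0)|\to0$ as $t\to0$ by (2a), (2b) and continuity of $\lambda_n,e_n$, so (as $F$ is finite) the second term is $<\varepsilon/4$ for $t$ small; and $1-\kappa_t<\varepsilon/4$ for $t$ small. Hence $K_\varepsilon(\xi,\eta)<\varepsilon$, uniformly in such $\xi$. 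The reverse inclusion is handled identically, pairing $\eta$ in the $\Dirac_t$-graph-unit-ball with $\kappa_t'\sum_{n\in F}\inner{\eta}{e_n(t)}{\Hilbert}e_n(0)$ (legitimate since $F=\{n:|\lambda_n(t)|<\Lambda\}$). So $\tunnelsep{\tau}{\Dirac_0,\Dirac_t}{\varepsilon}<\varepsilon$ for $t$ small, and combining with the extent bound and the $\delta$ of hypothesis~(1) gives the theorem.

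I expect the separation estimate to be the main obstacle. The subtlety is the order of quantifiers: the truncation level $\Lambda$ must be committed to \emph{first}, large enough that the tail of every graph-norm-bounded vector is negligible, and only afterwards may one invoke continuity of the eigendata, which is uniform over the compact time-window $s\in[0,1/\varepsilon]$ only because finitely many eigenpairs are involved; hypothesis~(2c) is exactly the device preventing that finite index set $F$ from changing as $t\to0$ — no eigenvalue crosses $\pm\Lambda$ and none splits new indices into $[-\Lambda,\Lambda]$ — without which the natural pairing $\xi\mapsto\eta$ would fail to respect graph-norm bounds. The other ingredient, more bookkeeping than substance, is the verification that $(\module{E},\TDN)$ together with the $\C$-level data genuinely assembles into a metrical tunnel over $(\A\oplus\A,\TLip)$.
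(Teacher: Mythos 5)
Your construction is essentially the paper's own proof: the same metrical tunnel (base $(\A\oplus\A,\TLip,j_1,j_2)$ from hypothesis (1), module $\Hilbert\oplus\Hilbert$ with a max-type D-norm coupling the two graph norms and a multiple of $\norm{\xi-\eta}{\Hilbert}$, trivial $\C\oplus\C$ bottom layer), the same truncation to the finitely many eigenvalues of modulus below a threshold $\Lambda$ kept stable in $t$ by hypothesis (2c), the same transported vector $\sum_{n\in F}\inner{\xi}{e_n(0)}{\Hilbert}e_n(t)$ rescaled by a factor slightly below $1$, and the same uniform-in-$s\in[0,1/\varepsilon]$ continuity estimates for $e^{is\Dirac_t}$ on finitely many eigenpairs. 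The only substantive deviations are cosmetic: your tail bound is a direct Chebyshev-type estimate ($\leq 1/\Lambda$) where the paper proves a uniform truncation lemma via Abel summation, and your normalization $\kappa_t$ plays the role of the paper's factor $6/(6+\varepsilon)$ obtained from uniform convergence of the finite-dimensional D-norms.

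One point you dismiss as bookkeeping is not: for your diagram to be a metrical tunnel, $\Pi_\A$ and $\Pi_\B$ must map the closed $\TDN$-unit ball \emph{onto} the graph-norm unit balls of $\Dirac_0$ and $\Dirac_t$, and this neither mirrors the base-tunnel verification nor follows from the Leibniz inequality you cite --- it is precisely the eigenvector-transport estimate you only deploy later, namely the $s=0$ case of your separation bound (this is the paper's computation showing $\TDN(\xi,\eta)=1$). Moreover, with your choice of the constant $4/\varepsilon$ in $\TDN$, the pair $(\xi,\eta)$ you construct is only shown to satisfy $\norm{\xi-\eta}{\Hilbert}\lesssim 3\varepsilon/4$, so $\TDN(\xi,\eta)\leq 1$ fails as stated; you need each of the three error terms (tail, finite part, $(1-\kappa_t)$-defect) below $\varepsilon/12$, which your own method gives by taking $\Lambda\geq 12/\varepsilon$ and shrinking $\delta$. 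With that reallocation of constants and the surjectivity check made explicit, your argument coincides with the paper's.
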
	
We will prove Theorem \eqref{main-thm} in Section \eqref{sec:lemmas-and-proof}.

After we establish Theorem \eqref{main-thm}, we turn our attention 
to the situation in which the  family of metric spectral triples is associated 
to the variation of the Riemanian metric  along a polynomial path, see   our second main result  Theorem \eqref{Riemannian-cv-thm}. This theorem  provides an answer to our motivating question.

We now recall a few definitions and results to introduce the required notation to state our second main theorem.

As we will work with families of metrics, the various ``indices'' and parentheses involved in standard notations for Riemannian metrics and vector fields tend to become hard to read, so we shall adopt a useful variation, directly taken from the usual construction of the Hilbert module of tangent vector fields over a Riemannian manifold.

\begin{notation}
	If $M$ is a $C^k$ differentiable manifold, then we denote by $T^{p,q}M \coloneqq TM^{\otimes p} \otimes (T^\ast M)^\otimes q$ be the bundle of $(p,q)$ $C^k$-tensors over $M$.
\end{notation}

\begin{definition}\label{poly-path-def}
	A \emph{polynomial path of $C^k$-Riemannian metrics} $t\in [0,1] \mapsto g(t)$ is a function from $[0,1]$ to the set of all $C^k$-Riemannian metrics over $M$ for which there exist $h_0,\ldots,h_N \in \Gamma(Sym^{0,2}M)$ such that:
	\begin{equation*}
		g(t) = \sum_{j=0}^N t^j h_j \text.
	\end{equation*}
\end{definition}

	Let $t\in I\coloneqq [0,1] \mapsto g(t)$ be a polynomial path of $C^\infty$ Riemannian metrics over $M$. For each $t\in I$, let $\Gamma^2 \mathrm{Spin}_{g(t)}$ be the Hilbert space of square integrable sections of the spinor bundle over $M$ for the metric $g(t)$, and $\Dirac^t$ the associated Dirac operator. We also denote $\Gamma^2 \mathrm{Spin}_{g(0)}$ by $\Hilbert$, and by $\Dirac_0$ by  $\Dirac$. 
	
Since polynomial paths of $C^\infty$-Riemannian metrics are, in particular, analytic paths of metrics, by \cite{Bo-Gau, NowaczykThesis, Maier, Hermann12}, there exists a family of unitaries $t\in [0,1] \mapsto \beta(t)$ with $\beta(t):\Gamma^2 \mathrm{Spin}_{g(t)} \rightarrow \Hilbert$, such that:
\begin{itemize}
	\item $\beta(t)$ is a unitary from $\Gamma^2 \mathrm{Spin}_{g(t)}$ onto $\Hilbert$, which intertwines the action of $C(M)$ on $\Gamma^2 \mathrm{Spin}_{g(t)}$ and $\Hilbert$ (note that we will omit writing a special symbol for these representations),
	\item If we set, $\Dirac_t \coloneqq \beta(t)\Dirac^t \beta(t)^\ast$, for all $t\in [0,1]$, then $t\in [0,1] \mapsto \Dirac_t$ is a holomorphic family of self-adjoint operators of type (A) \cite[Section VII \S 2]{Kato}.
\end{itemize}

We are now ready to state our second main result, Theorem 
\eqref{Riemannian-cv-thm}, which will be derived in Section \eqref{sec:pf-sec-main-thm} from Theorem \eqref{main-thm}.

\begin{theorem}\label{Riemannian-cv-thm}
	Let $M$ be a closed connected spin manifold. If $t\in I \mapsto g(t)$ is a polynomial path of $C^\infty$ Riemannian metrics over $M$, then $t\in I\rightarrow (C(M),\Gamma^2 \mathrm{Spin}_{g(t)},\Dirac_{t})$ is a continuous function for the spectral propinquity.
\end{theorem}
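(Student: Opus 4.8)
The plan is to deduce Theorem~\ref{Riemannian-cv-thm} from Theorem~\ref{main-thm} after two preliminary reductions. First, each $\beta(t)\colon\Gamma^2\mathrm{Spin}_{g(t)}\to\Hilbert$ is $C(M)$-equivariant and conjugates $\Dirac^t$ to $\Dirac_t$, so $(C(M),\Gamma^2\mathrm{Spin}_{g(t)},\Dirac^t)$ and $(C(M),\Hilbert,\Dirac_t)$ are unitarily equivalent; since $\spectralpropinquity{}$ is a metric up to unitary equivalence, it suffices to prove that $t\mapsto(C(M),\Hilbert,\Dirac_t)$ is continuous on $I$. Second, continuity at a fixed $t_0\in I$ is enough, and replacing $u\mapsto g(u)$ by $u\mapsto g(t_0+u)$ (still polynomial near $u=0$) reduces us to $t_0=0$. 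As $M$ is closed, $\A=C(M)$ is unital and separable, so the task becomes: verify hypotheses (1) and (2) of Theorem~\ref{main-thm}.

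Hypothesis (2) is the Bourguignon--Gauduchon construction \cite{Bo-Gau} combined with Kato's analytic perturbation theory \cite{Kato,Hermann12}: a polynomial path is in particular analytic, hence $t\mapsto\Dirac_t$ is a holomorphic family of self-adjoint operators of type (A), and Kato's theorem furnishes real-analytic eigenvalue functions $\lambda_n$ and a real-analytic complete orthonormal system $(e_n)$ of eigenvectors, giving (2a)--(2b); for (2c), each $\lambda_n$ is continuous and, by the analyticity (compact resolvent, norm-resolvent continuity) of the family, only finitely many $\lambda_n$ meet a fixed $[-\Lambda,\Lambda]$ uniformly for $t$ near $0$, so the eigenvalue count in $[-\Lambda,\Lambda]$ is eventually constant in $t$.

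Hypothesis (1) is the heart of the matter. For $\varepsilon>0$ and $t$ small I would take $\tau_{\varepsilon,t}=(C(M)\oplus C(M),\TLip,j_1,j_2)$ with $\TLip$ as in the statement, $\Lip_0=\opnorm{[\Dirac_0,\cdot]}{}{\Hilbert}$ and $\Lip_t=\opnorm{[\Dirac_t,\cdot]}{}{\Hilbert}$. That $\TLip$ is a Leibniz Lip-norm vanishing exactly on scalars is formal from the corresponding properties of $\Lip_0,\Lip_t$ and the $C^*$-triangle inequality, and $\TLip$ induces the weak-* topology by Rieffel's criterion: the $\tfrac{2}{\varepsilon}\norm{\cdot}{\A}$-term confines the $\TLip$-ball to a neighborhood of the diagonal of the two individually weak-* relatively compact Lip-balls of $\Lip_0$ and $\Lip_t$. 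The substantive point is that $j_1,j_2$ are quantum isometries, and this is where the geometry enters. By Connes' distance formula and the $C(M)$-equivariance of $\beta(t)$, one has $\opnorm{[\Dirac_t,f]}{}{\Hilbert}=\opnorm{[\Dirac^t,f]}{}{\Gamma^2\mathrm{Spin}_{g(t)}}=\mathrm{Lip}_{d_{g(t)}}(f)$, the Lipschitz constant of $f$ for the geodesic distance $d_{g(t)}$ (finite, since $M$ is connected), for every Lipschitz $f\in C(M)$. Because the path is polynomial, $g(t)-g(0)=\sum_{j\geq1}t^j h_j$ is $O(t)$ in $C^0$-norm uniformly on the compact $M$, so $(1-Ct)g(0)\leq g(t)\leq(1+Ct)g(0)$ pointwise as quadratic forms for small $t$, whence $\sup_{x,y\in M}|d_{g(t)}(x,y)-d_{g(0)}(x,y)|\leq\omega(t)$ with $\omega(t)\to0$. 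Given $a\in C(M)$ with $\ell:=\Lip_0(a)<\infty$, the inf-convolution $b(x):=\inf_{y\in M}\big(a(y)+\ell\,d_{g(t)}(x,y)\big)$ satisfies $\Lip_t(b)\leq\ell$ and $0\leq a-b\leq\ell\,\omega(t)$ in supremum norm, so $\TLip(a,b)\leq\max\{\ell,\ell,\tfrac{2}{\varepsilon}\ell\,\omega(t)\}=\ell$ once $\omega(t)\leq\varepsilon/2$; hence $\inf_b\TLip(a,b)=\Lip_0(a)$ and $j_1$ is a quantum isometry, and the symmetric inf-convolution (using $d_{g(0)}$ in place of $d_{g(t)}$) handles $j_2$. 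Choosing $\delta$ so that $\omega(t)\leq\varepsilon/2$ on $[0,\delta)$ establishes hypothesis (1), and Theorem~\ref{main-thm} then yields $\lim_{t\to0}\spectralpropinquity{}((C(M),\Hilbert,\Dirac_t),(C(M),\Hilbert,\Dirac_0))=0$; running the argument at each $t_0\in I$ gives continuity on all of $I$.

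The main obstacle I anticipate is precisely this quantum-isometry step: identifying the commutator norms with geodesic Lipschitz constants and checking their invariance under the equivariant unitaries $\beta(t)$, and then turning the $C^0$-smallness of $g(t)-g(0)$ — the one place where the polynomial hypothesis is genuinely used — into a uniform comparison of $d_{g(t)}$ and $d_{g(0)}$ and thence into the inf-convolution (McShane-type) approximation of Lipschitz functions with no increase of the Lipschitz constant and small uniform error. By contrast, checking that $\TLip$ is a Leibniz Lip-norm and appealing to the cited perturbation theory for hypothesis (2) should be comparatively routine.
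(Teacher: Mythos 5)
Your proposal is correct, and its skeleton matches the paper's: reduce to the fixed Hilbert space via the Bourguignon--Gauduchon unitaries $\beta(t)$ \cite{Bo-Gau}, obtain hypothesis (2) of Theorem \eqref{main-thm} from Kato's theory of holomorphic families of type (A) \cite{Kato,Hermann12}, establish hypothesis (1) from the quantitative comparison of $g(t)$ with $g(0)$ furnished by the polynomial hypothesis, and then invoke Theorem \eqref{main-thm}. Where you diverge is in how hypothesis (1) is verified. The paper proceeds through the Lipschitz distance: Lemma \ref{diff-poly-path-lemma} gives $\left|\inner{X}{X}{g(t),x}-\inner{X}{X}{g(0),x}\right|\leq C t \inner{X}{X}{g(0),x}$ by compactness of the $g(0)$-sphere bundle, Lemma \ref{Lipschitz-implies-Lipschitz-lemma} converts this into the bi-Lipschitz estimate $\frac{1}{Ct+1}\Lip_{g(0)}\leq\Lip_{g(t)}\leq(Ct+1)\Lip_{g(0)}$ with common domain, Corollary \ref{Lipschitz-cv-cor} gives $\LipschitzD\to 0$, and then Theorem \ref{thm:Lip-qcms} together with Remark \ref{base-tunnel-remark} (i.e.\ the tunnel of \cite[Proposition 3.80]{Latremoliere15}) produces exactly the tunnel shape demanded by hypothesis (1). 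You instead verify the quantum-isometry property of $j_1,j_2$ by hand: from the same quadratic-form comparison you get uniform closeness of the geodesic distances $d_{g(t)}$ and $d_{g(0)}$, and then a McShane/inf-convolution extension produces, for each $a$ with $\Lip_0(a)=\ell$, an element $b$ with $\Lip_t(b)\leq\ell$ and $\norm{a-b}{C(M)}\leq\ell\,\omega(t)$, so $\TLip(a,b)\leq\ell$ once $\omega(t)\leq\varepsilon/2$. This is a legitimate and self-contained replacement for the appeal to \cite{Latremoliere15}; what it buys is transparency (the tunnel is exhibited concretely, and one sees precisely where the polynomial hypothesis enters), at the cost of having to check the routine Lip-norm/Leibniz/state-space-metrization properties of $\TLip$ yourself, which the cited machinery packages for free. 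Note that both routes ultimately rest on the same compactness argument behind Lemma \ref{diff-poly-path-lemma}, which your "$C^0$-smallness implies a bound relative to $g(0)$" step silently uses.

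One small caution on hypothesis (2c): as you state it, "the eigenvalue count in $[-\Lambda,\Lambda]$ is eventually constant in $t$" can fail when $\pm\Lambda$ meets $\spectrum{\Dirac_0}$, since a boundary eigenvalue may leave the interval for $t>0$. The paper sidesteps this by citing \cite[Theorem 2.2]{Nowaczyk2013}, and in any case only uses the count through Lemma \ref{constant-N-lemma}, where $\Lambda$ is taken outside $\{\alpha_n(0)\}$; your continuity-of-finitely-many-branches argument is fine once restricted to such $\Lambda$, so this is an imprecision rather than a gap, but it should be stated for $\Lambda\notin\spectrum{\Dirac_0}$ (or with the citation) to be airtight.
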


\section{Families of spectral triples and proof of Theorem \eqref{main-thm}}\label{sec:lemmas-and-proof}

In this section we will prove Theorem \eqref{main-thm}; the following 
are  hypotheses which we  will assume throughout this section.

\begin{hypothesis}\label{type-A-hyp}
	Assume that  $(\A,\Hilbert,\Dirac_t)_{t\in [0,1]}$ is a family of metric spectral triples for which there exists a family $(\alpha_n)_{n\in\N}$ of continuous $\R$-valued functions over $[0,1]$, and a family $(e_n)_{n\in\N}$ of functions over $[0,1]$, valued in $\Hilbert$, such that:
	\begin{enumerate}
		\item $(e_n(t))_{n\in\N}$ is a Hilbert basis of $\Hilbert$,
		\item For any fixed  $t\in [0,1]$,  $\lim_{n\rightarrow\infty} \alpha_n(t) = \infty$, 
		\item $(\alpha_n(0))_{n\in\N}$ is weakly increasing,
		\item $\Dirac^2(t) e_n(t) = \alpha_n(t) e_n(t)$ for all $t\in [0,1]$ and $n\in\N$,
		\item for all $\Lambda>0$, there exists $\delta>0$ such that, for all $t\in [0,\delta)$, we have \begin{equation*}
			|\spectrum{\Dirac_t^2}\cap[0,\Lambda]| = |\spectrum{\Dirac_0^2}\cap[0,\Lambda]|\text,
		\end{equation*}
	\end{enumerate}
	The graph norm of $\Dirac_t$ on its domain $\dom{\Dirac_t}$ is denoted by $\CDN_t$.
\end{hypothesis}

We now detail  a  succession of lemmas which relate various continuity properties of metric spectral triples to properties of their D-norms and domains. 
These lemmas will be used in the proof of Theorem \eqref{main-thm}.
Our main lemma, Lemma  \eqref{uniform-truncation-lemma} below, establishes a form of uniform truncation of vectors of controlled D-norms under our Hypothesis \eqref{type-A-hyp}.

\begin{lemma}\label{constant-N-lemma}
	If we assume Hypothesis (\ref{type-A-hyp}), then for all $\Lambda>0$ such that $\Lambda\notin\{\alpha_n(0):n\in\N\}$, there exists $\delta>0$ and $N\in\N$ such that, if $t\in[0,\delta]$, and if $n\geq N$, then $\alpha_n(t) > \Lambda$\text.
\end{lemma}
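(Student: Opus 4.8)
The statement is a purely spectral‐continuity fact: once $\Lambda$ avoids the eigenvalues of $\Dirac_0^2$, the number of eigenvalues of $\Dirac_t^2$ below $\Lambda$ stabilizes near $t=0$ (condition (5) of Hypothesis \eqref{type-A-hyp}), and the eigenvalue functions $\alpha_n$ are continuous with $\alpha_n(0)\to\infty$; we must upgrade this to a \emph{uniform} bound showing all high-index eigenvalues stay above $\Lambda$ on a neighborhood of $0$. The plan is to first use the properties at $t=0$ to pin down the relevant index $N$, then use condition (5) together with continuity to propagate this to small $t$.

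\textbf{Step 1: choice of $N$.} Since $\Lambda\notin\{\alpha_n(0):n\in\N\}$, and $(\alpha_n(0))_{n\in\N}$ is weakly increasing (condition (3)) with $\lim_n \alpha_n(0)=\infty$ (condition (2)), there is a well-defined finite cardinality $K \coloneqq |\{n\in\N : \alpha_n(0)<\Lambda\}| = |\spectrum{\Dirac_0^2}\cap[0,\Lambda)|$ (here I would note that condition (5), applied with a slightly larger threshold $\Lambda'$ with $\Lambda<\Lambda'<\alpha_{K}(0)$ if nontrivial, lets me identify the count of eigenvalues — with multiplicity — with the count of indices $n$ with $\alpha_n(0)<\Lambda$). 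Set $N \coloneqq K$, so that $\alpha_n(0)>\Lambda$ for all $n\geq N$ by weak monotonicity, and in fact $\alpha_N(0) > \Lambda$ strictly. Pick $\eta>0$ with $\alpha_N(0) - \Lambda > 2\eta$; set $\Lambda_1 \coloneqq \Lambda + \eta < \alpha_N(0) - \eta$.

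\textbf{Step 2: propagate to small $t$.} First, by continuity of $\alpha_N$ (and more generally of $\alpha_0,\dots,\alpha_N$ — a finite list), there is $\delta_1>0$ so that for $t\in[0,\delta_1]$ we have $\alpha_N(t) > \Lambda_1 > \Lambda$. This already handles the single index $n=N$; the content is to rule out any index $n>N$ dipping below $\Lambda$ for small $t>0$. Apply condition (5) of Hypothesis \eqref{type-A-hyp} with threshold $\Lambda$: there is $\delta_2>0$ with $|\spectrum{\Dirac_t^2}\cap[0,\Lambda]| = |\spectrum{\Dirac_0^2}\cap[0,\Lambda]| = K = N$ for $t\in[0,\delta_2)$. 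Now on $[0,\delta)$ with $\delta\coloneqq\min\{\delta_1,\delta_2\}$, the operator $\Dirac_t^2$ has exactly $N$ eigenvalues (with multiplicity) in $[0,\Lambda]$. Among the continuous eigenvalue branches $\alpha_0(t),\dots,\alpha_{N-1}(t)$, by continuity and shrinking $\delta$ once more we can also ensure each $\alpha_j(t)$ for $j<N$ stays within $\eta$ of its value at $0$ — but more simply: since $\alpha_N(t)>\Lambda$ on $[0,\delta)$ and the full nondecreasing-at-$t=0$ ordering need not persist, I instead argue by counting. The branches $\alpha_0,\dots,\alpha_{N-1}$ provide at most $N$ eigenvalues; if some $\alpha_n(t)\leq\Lambda$ with $n\geq N$, then together with the $N$ values (counted with the multiplicity structure) this would force $|\spectrum{\Dirac_t^2}\cap[0,\Lambda]|$ — I must be careful here — to be checked against exactly $N$.

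\textbf{Main obstacle.} The delicate point, which is where I would spend the real effort, is the bookkeeping between the \emph{indexed} eigenvalue branches $(\alpha_n)_{n\in\N}$ and the \emph{set} $\spectrum{\Dirac_t^2}\cap[0,\Lambda]$ \emph{with multiplicity}. A priori the continuous branches $\alpha_n$ need not be globally monotone in $t$ for $t>0$, and branches can cross, so ``$\alpha_n(0)>\Lambda \Rightarrow \alpha_n(t)>\Lambda$'' is not automatic for $n>N$ from continuity of that single branch alone (there are infinitely many such $n$, so no finite-continuity argument suffices). The resolution is: (i) condition (5) gives the exact count $N$ of eigenvalues $\leq\Lambda$ near $0$; (ii) the continuous branches $\alpha_0(t),\dots,\alpha_{N-1}(t)$, being genuine eigenvalues of $\Dirac_t^2$ by condition (4), must — once $\delta$ is small enough that none of them exceeds, say, $\Lambda + \text{(gap)}$, using their continuity and the fact that $\alpha_n(0)<\Lambda$ for $n<N$ — actually stay $\leq\Lambda$, thereby \emph{exhausting} the count of $N$ eigenvalues in $[0,\Lambda]$; hence no further eigenvalue, in particular no $\alpha_n(t)$ with $n\geq N$, can lie in $[0,\Lambda]$. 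I would write this exhaustion argument carefully, perhaps choosing $\Lambda$ replaced at the outset by some $\Lambda'$ with $\Lambda<\Lambda'<\min\{\alpha_N(0),\,\dots\}$ and applying condition (5) at $\Lambda'$, so that the first $N$ branches stay strictly below $\Lambda'$ and the $(N+1)$-st stays strictly above $\Lambda'>\Lambda$, closing the argument cleanly with a single $\delta = \min\{\delta_1,\delta_2\}$ and the desired $N$.
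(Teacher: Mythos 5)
Your argument is essentially the paper's proof: fix $N$ using the weakly increasing sequence $(\alpha_n(0))_n$ and the fact that $\Lambda$ avoids $\{\alpha_n(0)\}$, use continuity of the finitely many low branches to keep them in $[0,\Lambda]$ (with a small margin) for small $t$, and then invoke the eigenvalue-count constancy of Hypothesis (5) to conclude by exhaustion that no branch $\alpha_n$ with $n\geq N$ can enter $[0,\Lambda]$. The multiplicity-versus-set bookkeeping you flag as the delicate point is handled the same way (implicitly, reading the count in (5) with multiplicity) in the paper's own proof, so your proposal is correct and matches it.
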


\begin{proof}
	Let $N\in\N$ be given by $\{\alpha_0(0),\ldots,\alpha_N(0)\} = [0,\Lambda]\cap\{\alpha_n(0):n \in\N\}$. Let $\varepsilon \coloneqq \frac{1}{2}(\Lambda - \max_{n=0}^N \alpha_n(0))$; note that by assumption, $\varepsilon > 0$. By continuity, there exists $\delta_1 > 0$ such that $|\alpha_n(t) - \alpha_n(0)| < \varepsilon$ for all $t\in [0,\delta_1)$, and $j\in\{0,\ldots,N\}$. Therefore, for all $t\in[0,\delta_1)$, we have $\{\alpha_n(t):n\in\{0,\ldots,N\}\} \subseteq [0,\Lambda]$. 
	By Assumption (5), there also exists $\delta>0$ such that $|\{\alpha_n(t):n\in\N\}\}| = N+1$ for all $t\in[0,\delta)$. Therefore, $[0,\Lambda)\cap\{\alpha_n(t):n\in\N\}=\{\alpha_n(t):n\leq N\}$ for all $t\in [0,\min\{\delta_1,\delta\}]$.
\end{proof}

\begin{lemma}\label{uniform-truncation-lemma}
	If we assume Hypothesis (\ref{type-A-hyp}), then, for all $\varepsilon > 0$, there exists $N\in\N$ and $\delta>0$ such that, for all  $t\in [0,\delta]$, and for all $\xi \in \dom{\Dirac_0}$,
	\begin{equation}\label{eq:proj-onto-trunc}
		\norm{\xi - P_N(t)\xi}{\Hilbert} \leq \varepsilon\, \CDN_t(\xi) \text,
	\end{equation}
	where  $P_N(t)$ is the orthogonal projection onto $\mathrm{span}\{e_1(t),\ldots,e_N(t)\}$.
\end{lemma}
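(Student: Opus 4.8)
The plan is to expand an arbitrary vector in the eigenbasis $(e_n(t))_n$ of $\Dirac_t^2$ and to use Lemma \ref{constant-N-lemma} to pin down, uniformly for $t$ near $0$, a finite truncation index beyond which all the eigenvalues $\alpha_n(t)$ are large; the inequality then falls out of a Parseval-type estimate.

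First I would fix $\varepsilon > 0$ and choose a threshold $\Lambda \geq \varepsilon^{-2}$ avoiding the countable set $\{\alpha_n(0):n\in\N\}$. Feeding this $\Lambda$ into Lemma \ref{constant-N-lemma} produces $\delta > 0$ and $N\in\N$ such that $\alpha_n(t) > \Lambda$ for every $t\in[0,\delta]$ and every index past the cut-off; after a harmless adjustment of $N$ to reconcile the indexing convention, this says exactly that for each such $t$ the range of $1 - P_N(t)$ equals the closed linear span of those $e_n(t)$ with $\alpha_n(t) > \Lambda$. I would also record that, $\Dirac_t$ being self-adjoint, each $\alpha_n(t) = \norm{\Dirac_t e_n(t)}{\Hilbert}^2 \geq 0$, so that $\Dirac_t^2$ is the non-negative self-adjoint operator diagonalised by $(e_n(t))_n$, and by the spectral theorem $\dom{\Dirac_t} = \{\eta : \sum_n \alpha_n(t)\,\lvert\inner{\eta}{e_n(t)}{\Hilbert}\rvert^2 < \infty\}$ with $\norm{\Dirac_t\eta}{\Hilbert}^2 = \sum_n \alpha_n(t)\,\lvert\inner{\eta}{e_n(t)}{\Hilbert}\rvert^2$ on that domain.

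Then, for $t\in[0,\delta]$ and $\xi\in\dom{\Dirac_0}$, I would argue as follows. If $\xi\notin\dom{\Dirac_t}$ the right-hand side of \eqref{eq:proj-onto-trunc} is infinite and there is nothing to prove, so assume $\xi\in\dom{\Dirac_t}$ and set $c_n\coloneqq\inner{\xi}{e_n(t)}{\Hilbert}$. By the description of $P_N(t)$ above, $\norm{\xi - P_N(t)\xi}{\Hilbert}^2 = \sum_{\alpha_n(t)>\Lambda}\lvert c_n\rvert^2$, and since each surviving eigenvalue exceeds $\Lambda$,
\begin{align*}
	\norm{\xi - P_N(t)\xi}{\Hilbert}^2 &= \sum_{\alpha_n(t)>\Lambda}\lvert c_n\rvert^2 \leq \frac{1}{\Lambda}\sum_{\alpha_n(t)>\Lambda}\alpha_n(t)\,\lvert c_n\rvert^2 \\
	&\leq \frac{1}{\Lambda}\norm{\Dirac_t\xi}{\Hilbert}^2 \leq \frac{1}{\Lambda}\,\CDN_t(\xi)^2 \leq \varepsilon^2\,\CDN_t(\xi)^2 \text,
\end{align*}
using $\norm{\Dirac_t\xi}{\Hilbert}\leq\CDN_t(\xi)$ (valid for the graph norm regardless of convention) and $\Lambda\geq\varepsilon^{-2}$. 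Taking square roots gives \eqref{eq:proj-onto-trunc}, and $N$, $\delta$ depend only on $\varepsilon$.

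The substance of the argument is entirely concentrated in Lemma \ref{constant-N-lemma}: the real difficulty is obtaining the truncation index $N$ and the radius $\delta$ \emph{uniformly} in $t$, which rests on Assumption (5) of Hypothesis \ref{type-A-hyp} (local constancy of the number of eigenvalues of $\Dirac_t^2$ below a threshold) together with continuity of the $\alpha_n$ and monotonicity of $(\alpha_n(0))_n$. Granting that, the present lemma reduces to the displayed estimate; the only points needing a little care are the bookkeeping that makes $1 - P_N(t)$ project onto the high-eigenvalue subspace, and the observation that the statement is vacuous off $\dom{\Dirac_t}$.
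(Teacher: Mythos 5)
Your proof is correct, and its skeleton matches the paper's: both arguments fix a threshold $\Lambda$ of size comparable to $\varepsilon^{-2}$, invoke Lemma \ref{constant-N-lemma} to obtain $N$ and $\delta$ uniformly in $t$, expand $\xi$ in the eigenbasis $(e_n(t))_{n\in\N}$ of $\Dirac_t^2$, and then bound the tail of that expansion against $\CDN_t(\xi)$. Where you genuinely diverge is in the tail estimate itself: the paper introduces $\mu_n(t)=\alpha_n(t)+1$, splits $\sum_{n\geq N}|a_n(t)|^2$ by Cauchy--Schwarz, and controls $\sum_{n\geq N}|a_n(t)|^2/\mu_n(t)$ through an Abel summation, whereas you use the one-line Chebyshev-type bound $\sum_{n>N}|c_n|^2\leq \Lambda^{-1}\sum_{n>N}\alpha_n(t)|c_n|^2\leq \Lambda^{-1}\norm{\Dirac_t\xi}{\Hilbert}^2$. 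Your route is more elementary and arguably more robust: the paper's Abel-summation step tacitly uses monotonicity of $n\mapsto\mu_n(t)$ (which Hypothesis \ref{type-A-hyp} only posits at $t=0$) and invokes an unexplained $\delta_1$, neither of which your argument needs; you also take the care, glossed over in the paper, to pick $\Lambda$ outside the countable set $\{\alpha_n(0):n\in\N\}$ so that Lemma \ref{constant-N-lemma} applies, and you dispose explicitly of the case $\xi\notin\dom{\Dirac_t}$. One small imprecision: the range of $1-P_N(t)$ need not \emph{equal} the closed span of the $e_n(t)$ with $\alpha_n(t)>\Lambda$ (a low-index eigenvalue could also exceed $\Lambda$); but only the inclusion of the tail indices in the high-eigenvalue set is used, so replacing your first equality by an inequality repairs the display without affecting anything else.
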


\begin{proof}
	We define $\mu_n(t) \coloneqq \alpha_n(t) + 1$ for all $n\in\N$ and $t\in [0,1]$. We note that since $\alpha_n(t)\geq 0$ by construction, $\mu_n(t) \geq 1 > 0$ for all $t \in [0,1]$ and $n\in\N$.
	
	Let $\varepsilon > 0$ and let $\Lambda = \frac{8}{\varepsilon^2} >0$. By Lemma (\ref{constant-N-lemma}), there exists $N\in\N$ and $\delta_0>0$ such that, for all $t \in [0,\delta_0)$, and for all $n\geq N$, we have $\mu_n(t) > \alpha_n(t) > \Lambda$.
	
	Let $\delta\coloneqq\min\{\delta_0,\delta_1\}$, and fix $t \in [0,\delta)$. Assume that  $\xi \in \dom{\Dirac_t}$ with $\CDN_t(\xi) \leq 1$.
	
	We write $\xi = \sum_{n\in\N} a_n(t) e_n(t)$ for $(a_n(t))_{n\in\N} \in \ell^2(\N)$ with $a_n(t) = \inner{\xi}{e_n(t)}{\Hilbert}$ for all $n\in\N$. Since $e_n$ is continuous, so is $a_n$. 
	
	As $(e_n)_{n\in\N}$ is a Hilbert basis and $\norm{\xi}{\Hilbert}\leq \CDN_t(\xi)\leq 1$, we have $\sum_{n\in\N} |a_n(t)|^2 \leq 1$. Moreover, 
	\begin{equation*}
		\norm{\Dirac_t\xi}{\Hilbert}^2 = \inner{\Dirac_t\xi}{\Dirac_t\xi}{\Hilbert} = \inner{\Dirac_t^2\xi}{\xi}{\Hilbert} = \sum_{n\in\N} \alpha_n(t) |a_n(t)|^2 \text. \end{equation*}

	Also, note that since $\CDN_t(\xi)\leq 1$, we have:
	\begin{equation*}
		\sum_{n\in\N} \mu_n(t) |a_n|^2 = \sum_{n\in\N} |a_n(t)|^2 + \sum_{n\in\N} \alpha_n(t) |a_n(t)|^2 = \norm{\xi}{\Hilbert}^2 + \norm{\Dirac_t\xi}{\Hilbert}^2 \leq 2 \CDN_t(\xi)^2 \leq 2\text{.}
	\end{equation*}
	
	We then have:
	\begin{align*}
		\sum_{n\geq N} |a_n(t)|^2 &= \sum_{n\geq N} \frac{|a_n(t)|}{\sqrt{\mu_n(t)}} \sqrt{\mu_n(t)} |a_n(t)|\\
		&\leq \sqrt{\sum_{n\geq N} \frac{|a_n(t)|^2}{\mu_n(t)}} \sqrt{\sum_{n\geq N} \mu_n(t) |a_n(t)|^2}\\
		&\leq \sqrt{\sum_{n\geq N} \frac{|a_n(t)|^2}{\mu_n(t)}} \cdot 2 \text{.}
	\end{align*}
	
	Using Abel summation and since $\sum_{n\in\N} |a_n(t)|^2\leq 1$, for all $n > N$:
	\begin{align*}
		\sum_{j = N}^n \frac{|a_j(t)|^2}{\mu_j(t)} 
		&= \frac{1}{\mu_{n+1}(t)}\sum_{j=0}^n |a_j(t)|^2 - \frac{1}{\mu_N(t)}\sum_{j=0}^N |a_j(t)|^2 \\
		&\quad + \sum_{j=N}^n \left(\sum_{m=0}^j |a_m(t)|^2\right) \left( \frac{1}{\mu_{j+1}(t)}-\frac{1}{\mu_{j}(t)} \right)\\
		&\leq \frac{1}{\mu_{n+1}(t)} + \sum_{j=N}^n \left(\frac{1}{\mu_{j+1}(t)}-\frac{1}{\mu_j(t)}\right)\\
		&= \frac{2}{\mu_{n+1}(t)} - \frac{1}{\mu_N(t)} \leq \frac{2}{\mu_{n+1}(t)} \\
		&\leq \frac{\varepsilon^2}{4} \text{.}
	\end{align*}
	Hence $\norm{\xi-P_N(t)\xi}{\Hilbert} \leq 2 \frac{\varepsilon}{2} = \varepsilon$. By homogeneity, we conclude that for all $\xi \in \dom{\Dirac_t}$, we have $\norm{\xi-P_N(t)}{\Hilbert} \leq \varepsilon \, \CDN_t(\xi)$.  
\end{proof}

Now, we prove that if we restrict to some finite dimensional subspaces of the common domain. under our hypothesis we have a continuous field of D-norms  

\begin{lemma}\label{continuous-field-D-norms-lemma}
	If we assume Hypothesis (\ref{type-A-hyp}), then for any  $C>0$ and  $N\in\N$, the family $(\norm{\cdot}{t})_{t\in [0,1]}$ of norms on $\C^{N+1}$, defined for each $t\in [0,1]$ by
	\begin{equation*}
		\norm{\cdot}{t,N} : (z_0,\ldots,z_N) \in \C^{N+1} \mapsto \CDN_t \left(\sum_{n=0}^ N z_n e_n(t)\right)
	\end{equation*}
	converges uniformly to $\norm{\cdot}{0}$ on the closed ball of radius $C$, center $0$, in $\C^{N+1}$.
\end{lemma}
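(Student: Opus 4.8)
The plan is to put $\norm{\cdot}{t,N}$ into closed form and then reduce the assertion to the continuity at $0$ of the \emph{finitely many} functions $\alpha_0,\ldots,\alpha_N$ supplied by Hypothesis (\ref{type-A-hyp}).

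First I would derive the explicit formula. Fix $t\in[0,1]$ and set $\xi = \sum_{n=0}^N z_n e_n(t)$. By Hypothesis (\ref{type-A-hyp})(4) each $e_n(t)$ is an eigenvector of $\Dirac_t^2$; since $\Dirac_t$ is self-adjoint we have $e_n(t)\in\dom{\Dirac_t^2}\subseteq\dom{\Dirac_t}$ and, by the very definition of $\dom{\Dirac_t^2}$, also $\Dirac_t e_n(t)\in\dom{\Dirac_t}$, so $\xi\in\dom{\Dirac_t}$ and $\Dirac_t\xi\in\dom{\Dirac_t}$, which makes the identity $\norm{\Dirac_t\xi}{\Hilbert}^2 = \inner{\Dirac_t^2\xi}{\xi}{\Hilbert}$ legitimate. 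Combining the eigenvalue equation with the orthonormality of $(e_n(t))_{n\in\N}$ (Hypothesis (\ref{type-A-hyp})(1)) gives $\norm{\xi}{\Hilbert}^2 = \sum_{n=0}^N|z_n|^2$ and $\norm{\Dirac_t\xi}{\Hilbert}^2 = \sum_{n=0}^N\alpha_n(t)|z_n|^2$, whence
\begin{equation*}
	\norm{(z_0,\ldots,z_N)}{t,N} = \sqrt{\textstyle\sum_{n=0}^N|z_n|^2} + \sqrt{\textstyle\sum_{n=0}^N\alpha_n(t)|z_n|^2}\text.
\end{equation*}

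Next I would compare this against the $t=0$ version. The first summand does not depend on $t$, so only the second matters. Using $\alpha_n(t)\geq 0$ together with the elementary bound $|\sqrt a-\sqrt b|\leq\sqrt{|a-b|}$ for $a,b\geq 0$, we obtain for every $(z_0,\ldots,z_N)$ in the closed ball of radius $C$ centered at $0$ that
\begin{equation*}
	\left|\norm{(z_0,\ldots,z_N)}{t,N} - \norm{(z_0,\ldots,z_N)}{0,N}\right| \leq \sqrt{\textstyle\sum_{n=0}^N|\alpha_n(t)-\alpha_n(0)|\,|z_n|^2} \leq C\sqrt{\max_{0\leq n\leq N}|\alpha_n(t)-\alpha_n(0)|}\text.
\end{equation*}
Since $\alpha_0,\ldots,\alpha_N$ is a finite family of functions continuous on $[0,1]$, for every $\varepsilon>0$ there is $\delta>0$ with $\max_{0\leq n\leq N}|\alpha_n(t)-\alpha_n(0)| < \varepsilon^2/C^2$ for all $t\in[0,\delta)$, and the displayed estimate then yields the asserted uniform convergence on the ball as $t\to 0$.

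I do not expect a substantive obstacle here: the argument works precisely because the vectors $e_n(t)$ simultaneously form an orthonormal family and diagonalize $\Dirac_t^2$, so that $\CDN_t$ restricted to $\mathrm{span}\{e_0(t),\ldots,e_N(t)\}$ becomes an explicit elementary function of $\alpha_0(t),\ldots,\alpha_N(t)$ alone. The only point that requires a moment's care — and hence the closest thing to a ``hard part'' — is the domain bookkeeping needed to justify writing $\norm{\Dirac_t\xi}{\Hilbert}^2 = \inner{\Dirac_t^2\xi}{\xi}{\Hilbert}$, which, as indicated, is immediate from the self-adjointness of $\Dirac_t$ and the definition of $\dom{\Dirac_t^2}$; everything else is a routine continuity-plus-Hölder estimate.
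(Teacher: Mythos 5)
Your proof is correct, and it rests on the same central observation as the paper's: on $\mathrm{span}\{e_0(t),\ldots,e_N(t)\}$ the graph norm diagonalizes, so that $\norm{(z_0,\ldots,z_N)}{t,N}=\sqrt{\sum_{n=0}^N|z_n|^2}+\sqrt{\sum_{n=0}^N\alpha_n(t)|z_n|^2}$, reducing everything to the finitely many continuous functions $\alpha_0,\ldots,\alpha_N$. Where you diverge is in how uniformity over the ball is extracted. The paper argues softly: it uses the boundedness of the $\alpha_j$ over $[0,1]$ to show the family $(\norm{\cdot}{t,N})_{t}$ is uniformly Lipschitz on $\C^{N+1}$ (hence equicontinuous), observes pointwise convergence as $t\to 0$ from the same closed form, and then invokes an Arz{\'e}la--Ascoli-type argument to upgrade pointwise to uniform convergence on the compact ball. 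You instead give a direct quantitative bound, $\left|\norm{z}{t,N}-\norm{z}{0,N}\right|\leq C\sqrt{\max_{0\leq n\leq N}|\alpha_n(t)-\alpha_n(0)|}$, via the elementary inequality $|\sqrt a-\sqrt b|\leq\sqrt{|a-b|}$, which yields uniform convergence immediately and with an explicit modulus of continuity in $t$; this is more elementary (no compactness argument needed) and gives a rate, whereas the paper's route requires less bookkeeping with inequalities and would also apply in situations where only equicontinuity plus pointwise convergence is available rather than a clean closed-form difference estimate. Your attention to the domain issue (that $e_n(t)\in\dom{\Dirac_t^2}\subseteq\dom{\Dirac_t}$ justifies $\norm{\Dirac_t\xi}{\Hilbert}^2=\inner{\Dirac_t^2\xi}{\xi}{\Hilbert}$ for finite combinations of the $e_n(t)$) is a point the paper passes over silently, and it is handled correctly.
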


\begin{proof}
	Since $\alpha_0,\ldots,\alpha_N$ are continuous over the compact $[0,1]$, they are bounded. Let
	\begin{equation*}
		M \coloneqq \sup \left\{ \alpha_j(t) : t \in [0,1], j \in \{ 0,\ldots,N \} \right\} \text.
	\end{equation*}
	
	Denote by $\norm{\cdot}{\C^{N+1}}$ the usual $2$-norm on $\C^{N+1}$. Fix $t\in [0,1]$. Let $(z_0,\ldots,z_N) \in \C^{N+1}$. We note that:
	\begin{align}\label{dnormfield-eq-0}
		\norm{(z_0,\ldots,z_N)}{t,N} 
		&= \norm{(z_0,\ldots,z_N)}{\C^{N+1}} + \sqrt{\sum_{j=0}^N |\alpha_j(t)| |z_j|^2} \\
		&\leq (1+\sqrt{M}) \norm{(z_0,\ldots,z_N)}{\C^{N+1}}\text. \nonumber
	\end{align}
	Therefore, for all $z,z' \in \C^{N+1}$, 
	\begin{equation*}
		\left| \norm{z}{t,N} - \norm{z'}{t,N} \right| \leq \norm{z-z'}{t,N} \leq (1+\sqrt{M}) \norm{z-z'}{\C^{N+1}}\text.
	\end{equation*}
	Therefore, $(\norm{\cdot}{t,N})_{t\in [0,1]}$ is an equicontinuous family of continuous functions. Moreover, Expression (\ref{dnormfield-eq-0}) also shows that, since $\alpha_0$,\ldots,$\alpha_N$ are continuous, $t\in[0,1]\mapsto\norm{\cdot}{t}$ converges pointwise to $\norm{\cdot}{0}$ as $t\rightarrow 0$. In particular, by Arz{\'e}la-Ascoli, $t\mapsto\norm{\cdot}{t,N}$ converges uniformly to $\norm{\cdot}{0}$ over the compact set $\{z\in\C^{N+1} :  \norm{z}{\C^{N+1}}\leq C \}$ for any $C>0$. 
\end{proof}

We now can bring together all of the above  lemmas to establish Theorem (\ref{main-thm}).

\begin{proof}[{Proof of Theorem (\ref{main-thm})}]
	Once more, we denote the graph norm of $\Dirac_t$ on its domain $\dom{\Dirac_t}$ by $\CDN_t$.
	
	Let $\varepsilon > 0$, and let $\alpha_n(t) = \lambda_n(t)^2$; we may re-index $\lambda_n$ so that $(\alpha_n(0))_{n\in\N}$ is weakly increasing. By assumption, we now meet the assumptions of Hypothesis (\ref{type-A-hyp}).
	
	By Lemma (\ref{uniform-truncation-lemma}), there exists $\delta_0> 0$ and $N \in \N$ such that, for all $t\in[0,\delta]$, if $\xi \in \dom{\Dirac_0}$ with $\CDN_t(\xi)\leq 1$, then:
	\begin{equation}\label{eq:trunc-ep-6}
		\norm{\xi - P_N(t)\xi}{\Hilbert} < \frac{\varepsilon}{6}\CDN_t(\xi) \text.
	\end{equation}
	
	By Lemma (\ref{continuous-field-D-norms-lemma}), there exists $\delta_1>0$ such that
	\begin{equation*}
		\sup_{\substack{ (z_0,\ldots,z_N)\in\C^{N+1} \\ \norm{(z_0,\ldots,z_N)}{\C^{N+1}} \leq 1} } |\norm{(z_0,\ldots,z_N)}{t} - \norm{(z_0,\ldots,z_N)}{0}| < \frac{\varepsilon}{6} \text.
	\end{equation*}
	In particular, if $\xi \in \dom{\Dirac_0}$, if $t \in [0,\min\{\delta_0,\delta_1\}]$, if $\{s,r\} = \{0,t\}$, and if we write $\xi = \sum_{j=0}^N z_j e_j(s)$, then by homogeneity,
	\begin{equation}\label{main-thm-eq-2}
		\CDN_r(\sum_{j=0}^N z_j e_j(r)) \leq \CDN_s(\xi) \frac{6+\varepsilon}{6}\text,\end{equation}
	since $\CDN_t(\sum_{j=0}^N z_j e_j(r))\leq 1$ implies $\norm{\sum_{j=0}^N z_j e_j(r)}{\Hilbert}\leq 1$, i.e. $\norm{(z_0,\ldots,z_N)}{\C^{N+1}} \leq 1$.
	
	By assumption, let $\delta_2> 0$ such that for all $t \in [0,\delta_2]$, there exists a tunnel $\tau_t$ from $(\A,\opnorm{[\Dirac_0,\cdot]}{}{\Hilbert})$ to $(\A,\opnorm{[\Dirac_t,\cdot]}{}{\Hilbert}$ of the form given in our assumption. Note that a standard calculation \cite{Latremoliere13b} shows that the extent of $\tau_{t}$ is at most $\varepsilon$.
	
	By continuity, there exists $\delta_3 > 0$ such that 
	\begin{equation}\label{main-thm-eq-3}
		\norm{e_n(t)-e_0(t)}{\Hilbert} < \frac{\varepsilon}{12(N+1)}
	\end{equation}
	for all $n \in \{0,\ldots,N\}$ and $t\in[0,\delta_1]$.
	
	By continuity, there also exists $\delta_4>0$ such that for all $x,t \in [0,\delta_4]$:
	\begin{equation}
		\sup\{ |\exp(i x \lambda_j(t)) - \exp(i x \lambda_j(0))| : j \in \{0,\ldots,N\} \} < \frac{\varepsilon}{12}\text.
	\end{equation}

	Let $\delta \coloneqq \min\{\delta_0,\delta_1,\delta_2, \delta_3, \delta_4 \} > 0$.
	
	Fix  $t \in [0,\delta]$. We define, for all $\xi,\eta \in \Hilbert$:
	\begin{equation*}
		\TDN_t(\xi,\eta) \coloneqq \max\left\{ \CDN_0(\xi), \CDN_t(\eta), \frac{2}{\varepsilon}\norm{\xi-\eta}{\Hilbert} \right\} \text,
	\end{equation*}
	allowing for the value $\infty$.
	
	Let $\xi\in\dom{\Dirac_0}$ with $\CDN_0(\xi) = 1$, and let 
	\begin{equation*}
		\eta \coloneqq \frac{6}{6+\varepsilon}\sum_{n\leq N} \inner{\xi}{e_n(0)}{\Hilbert} e_n(t)\text.
	\end{equation*}
	We record that $\CDN_t(\eta)\leq 1$ by Expression \eqref{main-thm-eq-2}. Moreover, 
	\begin{align*}
		\norm{\xi-\eta}{\Hilbert}
		&\leq \underbracket[1pt]{\norm{\xi - P_N(0)\xi}{\Hilbert}}_{\leq\frac{\varepsilon}{6} \text{ by Exp. \eqref{eq:trunc-ep-6}}} + \norm{P_N(0)\xi - \sum_{n\leq N} \inner{\xi}{e_n(0)}{\Hilbert} e_n(t)}{\Hilbert} \\
		&\quad + \norm{\sum_{n\leq N} \inner{\xi}{e_n(0)}{\Hilbert} e_n(t)-\eta}{\Hilbert} \\
		&\leq \frac{\varepsilon}{6} + \sum_{n\leq N} \underbracket[1pt]{\norm{e_n(0)-e_n(t)}{\Hilbert}}_{\leq \frac{\varepsilon}{12(N+1)} \text{ by Exp. \eqref{main-thm-eq-3}}} + \frac{\varepsilon}{6+\varepsilon} \\
		&\leq \frac{\varepsilon}{6} + \frac{\varepsilon}{12} + \frac{\varepsilon}{6} \leq \frac{\varepsilon}{2} \text.
	\end{align*}
	Hence $\TDN(\xi,\eta) = 1$.
	
	A similar computation can be made for $\xi \in \Hilbert$ with $\CDN_t(\xi) = 1$, in which case we set  $\eta \coloneqq  \frac{6}{6+\varepsilon}\sum_{n\leq N} \inner{\xi}{e_n(t)}{\Hilbert} e_n(0)$; as above, $\TDN(\eta,\xi) = 1$.
	
	Of course, $\Hilbert\oplus\Hilbert$ is an $\A\oplus\A$-module for the diagonal action $(a,b)(\xi,\eta) = (a\xi,b\eta)$ for all $a,b\in\A$ and $\xi,\eta\in\Hilbert$.
	
	We now check the necessary Leibniz conditions.
	\begin{align*}
		\frac{2}{\varepsilon}\norm{a\xi-b\eta}{\Hilbert}
		&\leq \frac{2}{\varepsilon}\left( \norm{a(\xi-\eta)}{\Hilbert} + \norm{(a-b)\eta}{\Hilbert} \right) \\
		&\leq \norm{a}{\A}\left(\frac{2}{\varepsilon}\underbracket[1pt]{\norm{\xi-\eta}{\Hilbert}}_{\leq \TDN(\xi,\eta)} \right)+ \left(\underbracket[1pt]{\frac{2}{\varepsilon}\norm{a-b}{\A}}_{\leq \TLip(a,b)}\right)\norm{\eta}{\Hilbert} \\
		&\leq \left(\norm{(a,b)}{\A\oplus\B} + \TLip(a,b))\right)\TDN(\xi,\eta) \text.
	\end{align*}
	
	Our secondary tunnel is given by considering $\Hilbert\oplus\Hilbert$ as a $\C^2$ Hilbert module, with $\inner{(\xi,\eta)}{(\xi',\eta')}{\C^2} \coloneqq \left(\inner{\xi}{\xi'}{\Hilbert}, \inner{\eta}{\eta'}{\Hilbert}\right)$, and of course, $(\xi,\eta)(\lambda,\mu) = (\lambda\xi,\mu\eta)$, for all $\xi,\xi',\eta,\eta'\in\Hilbert$ and $\lambda,\mu \in \C$. If $j_1:(z,w)\in\C^2\mapsto z$ and $j_2(z,w)\in\C^2\mapsto w$, then we consider our secondary tunnel as $(\C^2,\Lip[Q],j_1,j_2)$ with
	\begin{equation*}
		\Lip[Q](z,w) \coloneqq \frac{2}{\varepsilon} |z-w| \text.
	\end{equation*}
	
	The inner Leibniz condition holds:
	\begin{align*}
		\frac{2}{\varepsilon} |\inner{\xi}{\xi'}{\Hilbert} - \inner{\eta}{\eta'}{\Hilbert}|
		&\leq \frac{2}{\varepsilon} |\inner{\xi-\eta}{\xi'}{\Hilbert} - \inner{\eta}{\eta'-\xi'}{\Hilbert}| \\
		&\leq \frac{2}{\varepsilon}\norm{\xi-\eta}{\Hilbert} \norm{\xi'}{\Hilbert} + \norm{\eta}{\Hilbert} \frac{1}{\varepsilon}\norm{\xi'-\eta'}{\Hilbert} \\
		&\leq \TDN(\xi,\eta) \norm{\xi'}{\Hilbert} + \norm{\eta}{\Hilbert}\TDN(\xi',\eta') \\
		&\leq \TDN(\xi,\eta) \TDN(\xi',\eta') + \TDN(\xi,\eta)\TDN(\xi',\eta') \\
		&= 2  \TDN(\xi,\eta) \TDN(\xi',\eta')\text.
	\end{align*}
	The extent of the tunnel $(\C^2,\Lip[Q])$ is $\frac{\varepsilon}{2}$ by construction.
	
	Our metrical tunnel is thus given by the metrical C*-correspondence $(\Hilbert\oplus\Hilbert,\TDN,\A\oplus\A,\TLip,\C^2,\Lip[Q])$, together with the quantum isometries $(\Pi_1,\pi_1,j_1)$ and $(\Pi_2,\pi_2,j_2)$, where:
	\begin{equation*}
		\Pi_1 : (\xi,\eta)\in\Hilbert\oplus\Hilbert\mapsto \xi\text{, } \Pi_2:(\xi,\eta)\in\Hilbert\oplus\Hilbert\mapsto\eta\text{, }\pi_1:(a,b)\in\A\oplus\A\mapsto a\text{ and }\pi_2:(a,b)\mapsto b\text.
	\end{equation*}
	We proved the extend of this tunnel is $\frac{\varepsilon}{2}$.
	
	Last, we check the covariant reach for our tunnel, and for the actions of the monoid $[0,\infty)$ given by exponentiating the Dirac operators. let $\xi \in\dom{\Dirac_t}$ with $\CDN_s(\xi)\leq 1$. As above, let $\eta\coloneqq \frac{6}{6+\varepsilon}\sum_{j=0}^N z_j  e_j(0)$ where $z_j \coloneqq \inner{\xi}{e_j(t)}{\Hilbert}$ for each $j\in\{0,\ldots,N\}$. Once again, by construction, $\CDN_0(\eta)\leq 1$. We then have:
	\begin{align*}
		\norm{\exp(i x \Dirac_t)\xi - \exp(it\Dirac_0) \eta}{\Hilbert}
		&\leq \norm{\exp(i x \Dirac_t)\xi - \exp(it\Dirac_0) \sum_{j=0}^N  z_j e_j(0)}{\Hilbert} \\&\quad+ \norm{\exp(i x \Dirac_0)\sum_{j=0}^N z_j e_j(0) - \exp(it\Dirac_0) \eta}{\Hilbert} \\
		&\leq \norm{\exp(i x \Dirac_t)(\xi - P_N(t)\xi)}{\Hilbert} \\
		&\quad+ \norm{\exp(i x \Dirac_t) P_N(t)\xi - \exp(i x \Dirac_0)\sum_{j=0}^N z_j e_j(0))}{\Hilbert} + \frac{\varepsilon}{6} \\
		&\leq \frac{\varepsilon}{3} + \norm{\sum_{j=0}^N z_j(\exp(ix\lambda_j(t))-\exp(i x\lambda_j(0)))e_j(t)}{\Hilbert} \\ 
		&\quad+ \norm{\sum_{j=0}^N z_j \exp(i x \lambda_j(0)) (e_j(t)-e_j(0))}{\Hilbert} +  \frac{\varepsilon}{6}\\
		&\leq \frac{\varepsilon}{6} + \sum_{j=0}^N |1-\exp(ix (\lambda_j(t)-\lambda_j(0)))| + \sum_{j=0}^N \norm{e_j(t)-e_j(0)}{\Hilbert} + \frac{\varepsilon}{3}\\
		&\leq \frac{\varepsilon}{6} + \frac{\varepsilon}{12} + \frac{\varepsilon}{12} + \frac{\varepsilon}{6} = \frac{\varepsilon}{2} \text.
	\end{align*} 
	
	Therefore, for all $(\omega,\omega') \in \dom{\TDN}$ with $\TDN(\omega',\omega) \leq 1$ (so, in particular, $\CDN_t(\omega)\leq 1$ and  $\norm{\omega}{\Hilbert}\leq 1$),
	\begin{align*}
		\left|\inner{\exp(i x \Dirac_t)\xi}{\omega}{\Hilbert} - \inner{\exp(i x \Dirac_0)\eta}{\omega'}{\Hilbert}\right| 
		&\leq \left|\inner{\exp(i x \Dirac_t)\xi-\exp(i x \Dirac_0)\eta}{\omega}{\Hilbert}\right| \\
		&\quad + \left|\inner{\exp(i x \Dirac_0)\eta}{\underbracket[1pt]{\omega-\omega'}_{\TDN(\omega,\omega')\leq 1\implies\norm{\omega-\omega'}\leq\frac{\varepsilon}{2}}}{\Hilbert}\right| \\
		&\leq \norm{\exp(i x \Dirac_t)\xi - \exp(it\Dirac_0) \eta}{\Hilbert} + \norm{\omega-\omega'}{\Hilbert} \\
		&\leq \frac{\varepsilon}{2} + \frac{\varepsilon}{2} = \varepsilon \text.
	\end{align*}
	
	The same computation applies if $\CDN_0(\xi)\leq 1$. Therefore:
	\begin{equation*}
		\spectralpropinquity{}((\A,\Hilbert,\Dirac_0),(\A,\Hilbert,\Dirac_t))\leq \varepsilon \text,
	\end{equation*}
	and so the proof of Theorem \eqref{main-thm} is finished.
\end{proof}

\section{Lipschitz convergence and proof of Theorem \eqref{Riemannian-cv-thm}}
\label{sec:pf-sec-main-thm}

Polynomial paths of $C^k$-Riemannian metrics have a natural Lipschitz property, which will give us in turn, Lipschitz convergence of their underlying metric spaces.
We start with fixing some notation.
\begin{notation}
	Let  $M$ be a closed connected Riemannian  manifold, and  $g$ be  a Riemannian metric on it,  that is, an element os  $\Gamma^\infty T^{0,2}M$ . If  $x \in M$, and $X,Y \in T_x M$ are two tangent vectors at $x$, then we denote $g_x(X,Y)$ by $\inner{X}{Y}{g,x} \in \R$ (indeed the metric at  $x$ is some bilinear form on $T_x M$, to which we can then apply to $X$ and $Y$). This notation  will avoid confusion when working with fields of metrics.
\end{notation}

\begin{lemma}\label{diff-poly-path-lemma}
	If $g$ is a polynomial path of metrics, then there exists $C>0$ such that, for all $t,t_0 \in [0,1]$, and for all $(x,X) \in TM$:
	\begin{equation*}
		\left| \inner{X}{X}{g(t),x} - \inner{X}{X}{g(t_0),x} \right| \leq C|t-t_0| \inner{X}{X}{g(0),x} \text.
	\end{equation*}
\end{lemma}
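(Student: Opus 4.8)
The plan is to work entirely pointwise and fiberwise, reducing the claim to an elementary estimate on polynomial functions of $t$. Fix $(x,X)\in TM$; write $g(t) = \sum_{j=0}^N t^j h_j$ with $h_j \in \Gamma(Sym^{0,2}M)$, so that the scalar function $f_X(t) \coloneqq \inner{X}{X}{g(t),x} = \sum_{j=0}^N t^j \inner{X}{X}{h_j,x}$ is a polynomial in $t$ of degree at most $N$ whose coefficients are the real numbers $c_j(X) \coloneqq \inner{X}{X}{h_j,x}$. By the mean value theorem (or simply telescoping the powers $t^j - t_0^j$), for $t,t_0 \in [0,1]$ we get $|f_X(t) - f_X(t_0)| \leq |t-t_0|\sum_{j=1}^N j\,|c_j(X)|$. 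The whole game is then to bound $\sum_{j=1}^N j\,|c_j(X)|$ by a uniform constant times $f_X(0) = c_0(X) = \inner{X}{X}{g(0),x} = \norm{X}{g(0),x}^2$.

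The key step is therefore: \emph{there exists $C>0$, independent of $x$ and $X$, such that $\sum_{j=1}^N j\,|\inner{X}{X}{h_j,x}| \leq C \inner{X}{X}{g(0),x}$ for all $(x,X)\in TM$.} I would prove this by compactness and homogeneity. For each fixed $j$ and $x$, the symmetric bilinear form $(h_j)_x$ is bounded with respect to the inner product $g(0)_x$ by its operator norm $\|(h_j)_x\|_{g(0)_x}$, i.e. $|\inner{X}{X}{h_j,x}| \leq \|(h_j)_x\|_{g(0)_x}\, \inner{X}{X}{g(0),x}$. Since $h_j$ and $g(0)$ are continuous sections over the compact manifold $M$, the function $x \mapsto \|(h_j)_x\|_{g(0)_x}$ is continuous (eigenvalues of a continuously varying symmetric form relative to a continuously varying positive-definite form depend continuously on $x$), hence bounded on $M$; call its supremum $M_j$. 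Then $C \coloneqq \sum_{j=1}^N j\,M_j$ works. Note $f_X(0) \geq 0$ since $g(0)$ is a metric, so the bound is vacuous (both sides zero) when $X = 0$ and otherwise genuine.

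Putting these together: for all $t,t_0\in[0,1]$ and all $(x,X)\in TM$,
\begin{equation*}
	\left| \inner{X}{X}{g(t),x} - \inner{X}{X}{g(t_0),x} \right| = |f_X(t)-f_X(t_0)| \leq |t-t_0|\sum_{j=1}^N j\,|c_j(X)| \leq C|t-t_0|\inner{X}{X}{g(0),x}\text.
\end{equation*}
The main (and essentially only) obstacle is the uniformity claim in the key step — making sure the comparison constant between $(h_j)_x$ and $g(0)_x$ can be chosen independently of $x$. This is exactly where the hypothesis that $M$ is \emph{closed} (compact) is used, via continuity of the fiberwise operator norm and extraction of a finite supremum; everything else is the trivial polynomial estimate. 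One should also take care that the estimate is stated for $\inner{X}{X}{\cdot,x}$ (the quadratic form) rather than the full bilinear form, which is all that is needed and which is what makes the relative-operator-norm comparison immediate.
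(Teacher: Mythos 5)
Your proposal is correct and follows essentially the same route as the paper: reduce to the elementary polynomial-in-$t$ estimate fiberwise, then obtain the uniform comparison $\left|\inner{X}{X}{h_j,x}\right| \leq C\,\inner{X}{X}{g(0),x}$ by compactness of $M$ together with homogeneity (the paper phrases this as boundedness of the continuous functions $(x,X)\mapsto \inner{X}{X}{h_j,x}$ on the compact $g(0)$-unit sphere bundle, which is just another packaging of your fiberwise operator-norm bound). The only cosmetic difference is that the paper bounds the $t$-derivative of $t\mapsto\inner{X}{X}{g(t),x}$ and integrates, whereas you telescope the powers directly; both give the same Lipschitz constant up to the same kind of bound.
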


\begin{proof}
	A simple computation shows that since, for all $x\in M$ and $X,Y \in T_x M$,
	\begin{equation*}
		\inner{X}{Y}{g(t),x} = \sum_{j=0}^N t^j \inner{X}{Y}{h_j, x}\text,
	\end{equation*}
	we have
	\begin{equation*}
		\frac{d}{dt}\inner{X}{Y}{g(t),x} = \sum_{j=0}^{N-1} (j+1) t^j \inner{X}{Y}{h_j,x}\text.
	\end{equation*}
	We now prove that this latter expression defines a bounded function over $TM$.
	
Let $SM \coloneqq \{ (x,X): x\in M, X \in T_x M:\  <X,X>_{g(0), x} = 1 \}$ be the $g(0)$-sphere bundle over $M$, whose topology is of course the restriction of the topology on $TM$. Since $M$ is compact, $SM$ is compact as well. Now, since $h_j$ is a continuous section of $ T^{2,0}M$ for all $j\in\{1,\ldots,N\}$, the map $(x,X) \mapsto \inner{X}{X}{h_j,x}$ is a continuous function over $SM$. Indeed, choose any finite atlas $(U_k,\psi_k)_{k=0}^K$ of $M$ consisting of local charts with orthonormal coordinates for $g(0)$, which exists by compactness of $M$. Let $x \in M$. There exists $k \in \{ 0,\ldots, K \}$ such that $x \in U_k$. Write $e_1,\ldots,e_d$ for the local coordinates in the chart $(U_k,\psi_k)$. Now write $h_{j,p,q} : x \in U_k \mapsto \inner{e_p}{e_q}{h_j,x}$, and note this is a continuous function over $U_k$ for all $j\in\{0,\ldots,N\}$ and $p,q\in\{1,\ldots,d\}$. 

Fix $(x,X) \in SM$ and write $X = \sum_{j=1}^d X_j e_j(x)$. We now prove that $(y,Y) \mapsto \inner{Y}{Y}{h_j,y}$ is continuous at $(x,X)$.

Let $\varepsilon > 0$, without loss of generality assume $\varepsilon \leq 1$. By continuity of $h_{j,p,q}$ at $x$, there exists an open subset $V_x$ of $U_k$ such that, for all $y \in V_x$, and for all $p,q \in \{1,\ldots, d\}$, 
\begin{equation*}
	|h_{j,p,q}(y) - h_{j,p,q}(x)| < \frac{\varepsilon}{\inner{X}{X}{g(0),x}+1}\text.
\end{equation*}
Fix $y \in V_x$. We now compute, for any $Y = \sum_{j=1}^d Y_j e_j(y) \in T_y M$ with $\sum_{j=1}^d |X_j-Y_j|^2 < \frac{\varepsilon}{2d^2(1+|h_{j,p,q}(x)|)}$:
\begin{align*}
	|(h_j)_y (Y,Y) - (h_j)_x(X,X)| 
	&\leq \sum_{p=1}^d |X_p^2 h_{j,p,q}(x) - Y_p^2 h_{j,p,q}(y)| \\
	&\leq \sum X_p^2 |h_{j,p,q}(x) - h_{j,p,q}(y)| + \sum_{p=1}^d |X_p^2 - Y_p^2| |h_{j,p,q}(y)| \\
	&\leq \frac{\varepsilon}{2} + d^2 (|h_{j,p,q}(x)|+1)\frac{\varepsilon}{2 d^2|h_{j,p,q}(x)|+1} = \varepsilon \text. 
\end{align*}
Therefore, as claimed, for each $j \in \{0,\ldots,N\}$, the function $(x,X)\in SM\mapsto \inner{X}{X}{h_j,x}$ is continuous over the compact set $SM$. Therefore there exists $C>0$ such that, for all $(x,X) \in SM$, and for all $j\in\{1,\ldots,N\}$, we have $\inner{X}{X}{h_j,x} \leq C$. By homogeneity, we therefore conclude that, for all $(x,X) \in TM$, we have $\inner{X}{X}{h_j,x} \leq \inner{X}{X}{g(0),x}$.

Hence,  for all $(x,X) \in TM$, 
\begin{equation*}
	\left| \frac{d\inner{X}{X}{g(t),x}}{dt} \right| \leq \sum_{j=0}^{N-1} j t^j \inner{X}{X}{h_j,x} \leq C \inner{X}{X}{g(0),x} \sum_{j=0}^{N-1} (j+1)t^j  \text. 
\end{equation*}

Therefore, for all $(x,X) \in TM$, and for all $t,t_0 \in [0,1]$:
\begin{equation*}
	|\inner{X}{X}{g(t),x} - \inner{X}{X}{g(t_0),x}| \leq C |\inner{X}{X}{g(0),x} |t-t_0| \text,
\end{equation*}
as claimed.
\end{proof}
 
\begin{notation}
	If $g$ is a Riemannian metric on the  closed connected Riemannian manifold $M$, then we will denote by $d_g$ the geodesic distance induced by $g$ on $M$, and by $\Lip_g$ the associated Lipschitz seminorm on $C(M)$.
\end{notation}

\begin{lemma}\label{Lipschitz-implies-Lipschitz-lemma}
	Let $M$ be a closed connected Riemannian manifold. Assume  that  $\{g(t)\}_{t\in [0,1]}$ is a family of Riemannian metrics on $M$ with  the following property:  for all $(x,X) \in TM$ and for all $t\in [0,1]$,
	\begin{equation*}
		|\inner{X}{X}{g(t),x} - \inner{X}{X}{g(0),x}|\leq C t \inner{X}{X}{g(0),x}\text,
	\end{equation*}
	for some constant $C>0$.
	Then for all $t\in [0,1]$,
	\begin{equation*}
		\dom{\Lip_{g(0)}} = \dom{\Lip_{g(t)}}\quad  \text{ and }\quad \frac{1}{C t +1}\Lip_{g(0)}\leq \Lip_{g(t)} \leq (C t +1)\Lip_{g(0)}.
	\end{equation*}
\end{lemma}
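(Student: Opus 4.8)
\emph{The plan} is to transfer the pointwise comparison of the quadratic forms $g(0)$ and $g(t)$ through the length functional, and then through the very definition of the Lipschitz seminorm of a metric space. First I would observe that the hypothesis is exactly the statement that, for every $x\in M$ and every $X\in T_xM$,
\begin{equation*}
	(1-Ct)\inner{X}{X}{g(0),x}\leq\inner{X}{X}{g(t),x}\leq(1+Ct)\inner{X}{X}{g(0),x}\text,
\end{equation*}
where the left-hand inequality is vacuous once $Ct\geq 1$ (there $g(t)_x$ is still positive definite). Taking square roots pointwise, the Riemannian norms of tangent vectors satisfy $\sqrt{\max\{1-Ct,0\}}\,\norm{X}{g(0),x}\leq\norm{X}{g(t),x}\leq\sqrt{1+Ct}\,\norm{X}{g(0),x}$.

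Next I would integrate this inequality along an arbitrary piecewise-$C^1$ curve $\gamma$ in the connected manifold $M$, obtaining the same comparison between its $g(0)$-length and its $g(t)$-length, and then take the infimum over all curves joining two prescribed points. Since $d_g$ is the geodesic distance of $g$, this gives, for all $x,y\in M$ and all $t\in[0,1]$ with $Ct<1$,
\begin{equation*}
	\sqrt{1-Ct}\;d_{g(0)}(x,y)\leq d_{g(t)}(x,y)\leq\sqrt{1+Ct}\;d_{g(0)}(x,y)\text,
\end{equation*}
while the upper bound alone persists for every $t\in[0,1]$. Because $\Lip_g$ is by definition the Lipschitz seminorm of the metric space $(M,d_g)$, i.e. $\Lip_g(f)=\sup\{|f(x)-f(y)|/d_g(x,y):x\neq y\}$ for $f\in C(M)$, the distance comparison immediately yields $\frac{1}{\sqrt{1+Ct}}\Lip_{g(0)}(f)\leq\Lip_{g(t)}(f)\leq\frac{1}{\sqrt{1-Ct}}\Lip_{g(0)}(f)$ for every $f\in C(M)$. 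In particular finiteness of one of the two seminorms at $f$ is equivalent to finiteness of the other, so $\dom{\Lip_{g(0)}}=\dom{\Lip_{g(t)}}$; and cleaning up the constants by $\frac{1}{\sqrt{1+Ct}}\geq\frac{1}{1+Ct}$ and $\frac{1}{\sqrt{1-Ct}}\leq 1+Ct$ produces the stated bound $\frac{1}{Ct+1}\Lip_{g(0)}\leq\Lip_{g(t)}\leq(Ct+1)\Lip_{g(0)}$.

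\emph{The only point requiring care} is bookkeeping with constants: the inequality $\frac{1}{\sqrt{1-Ct}}\leq 1+Ct$ amounts to $(1-Ct)(1+Ct)^2\geq 1$, which holds for $Ct$ small but degrades as $Ct\to 1$, and for $Ct\geq 1$ the lower comparison of the two geodesic distances disappears entirely. This is immaterial for the use of the lemma: in the proof of Theorem \eqref{Riemannian-cv-thm} only the regime $t\to 0$ is needed, where all of $\sqrt{1\pm Ct}$ and $(1\pm Ct)^{\pm 1}$ tend to $1$ together, so the comparison of the Lipschitz seminorms — hence of the underlying quantum compact metric spaces $(C(M),\Lip_{g(t)})$ — is as tight as desired. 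I do not expect any genuine obstacle beyond tracking which half of the hypothesis is in play at each step.
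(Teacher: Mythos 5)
Your argument is essentially the paper's own proof: compare the quadratic forms pointwise, integrate along an arbitrary $C^1$ curve to compare $g(0)$- and $g(t)$-lengths, take the infimum over curves to compare the geodesic distances $d_{g(0)}$ and $d_{g(t)}$, and then read off the two-sided comparison of the Lipschitz seminorms together with the equality of domains. Your bookkeeping caveat is in fact warranted rather than a defect: the paper's first display silently uses $\sqrt{\inner{X}{X}{g(0),x}}\leq(Ct+1)\sqrt{\inner{X}{X}{g(t),x}}$, which, exactly as you observe, follows from the hypothesis only when $(1+Ct)^2(1-Ct)\geq 1$ (the lower bound $g(t)\geq(1-Ct)g(0)$ being vacuous for $Ct\geq 1$), and this is harmless only because the lemma is applied in the regime $t\to 0$ in the proof of Theorem \eqref{Riemannian-cv-thm}.
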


\begin{proof}
Fix $x,x' \in M$. Let $\gamma$ be a $C^1$ path from $x$ to $x'$ in $M$. By definition of the geodesic distance for a Riemannian manifold,
\begin{align}\label{Lipschitz-implies-Lipschitz-eq1}
  d_{g(0)}(x,x') 
  &\leq \int_0^1 \sqrt{\inner{\frac{d\gamma}{ds}}{\frac{d\gamma}{ds}}{g(0),\gamma(s)}} \; ds \\
  &\leq (C t+1) \int_0^1 \sqrt{\inner{\frac{d\gamma}{ds}}{\frac{d\gamma}{ds}}{g(t), \gamma(s)}} \; ds \text.
\end{align}
As $\gamma$ above is an abitrary path in $M$ from $x$ to $x'$, we conclude from Expression (\ref{Lipschitz-implies-Lipschitz-eq1}) that $(C t+1)^{-1} d_{g(0)}(x,x') \leq d_{g(t)}(x,x')$.

A similar computation shows that
\begin{align}
  d_{g(t)}(x,x') 
  &\leq \int_0^1 \sqrt{\inner{\frac{d\gamma}{ds}}{\frac{d\gamma}{ds}}{g(t),\gamma(s)}} \; ds\\
  &\leq (C t +1) \int_0^1 \sqrt{\inner{\frac{d\gamma}{ds}}{\frac{d\gamma}{ds}}{g(0),\gamma(s)}} \; ds \text,
\end{align}
and again taking the infimum over all path $\gamma$ from $x$ to $x'$, we get $d_{g(t)}\leq (C t +1) d_{g(0)}$.

By definition, for any $f \in C(M)$ and allowing for $\infty$, it then follows that
\begin{equation*}
	\frac{1}{(C t +1)}  \Lip_{g(0)} \leq \Lip_{g(t)} \leq (C t +1)\Lip_{g(0)} \text,
\end{equation*}
as claimed.
\end{proof}

\begin{definition}\label{def:LipD}
	Denote by $\dil{.}$ the dilation \cite{Latremoliere16b}.  The \emph{Lipschitz distance} $\LipschitzD((\A,\Lip_\A),(\B,\Lip_\B))$ between two {\qcms s} $(\A,\Lip_\A)$ and $(\B,\Lip_\B)$ is defined by:
	
	\begin{multline*}
		\LipschitzD((\A,\Lip_\A),(\B,\Lip_\B))\coloneqq
		\inf\{ \max\{\ln\dil{\pi},\ln\dil{\pi^{-1}}\} : \\ \pi:(\A,\Lip_\A)\rightarrow(\B,\Lip_\B) \text{ bi-Lipschitz isomorphism } \} \text.
	\end{multline*}
	with the convention that $\inf\emptyset=\infty$.
\end{definition}

\begin{corollary}\label{Lipschitz-cv-cor}
	Let $M$ be a closed connected Riemannian  manifold. 
	Assume  that  $\{g(t)\}_{t\in [0,1]}$ is a family of Riemannian metrics on $M$ with  the following property:   for all $x\in M$, for all $X,Y \in   T_xM$, 
and  for all $t\in [0,1]$,
	\begin{equation*}
		|\inner{X}{Y}{g(t), x} - \inner{X}{Y}{g(0),x}|\leq C \inner{X}{Y}{g(0)}(x) |t|
	\end{equation*}
	for some $C>0$. Then    
	\begin{equation*}
		\lim_{t\rightarrow 0}\LipschitzD((C(M),\Lip_{g(t)}),(C(M),\Lip_{g(0)})) = 0 \text.
	\end{equation*}
\end{corollary}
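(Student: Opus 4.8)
The plan is to reduce immediately to Lemma \eqref{Lipschitz-implies-Lipschitz-lemma} and then read off the conclusion from the definition of the Lipschitz distance via the identity map. First I would observe that the hypothesis of the corollary, specialized to the case $X = Y$, is exactly the hypothesis of Lemma \eqref{Lipschitz-implies-Lipschitz-lemma}: for all $(x,X)\in TM$ and all $t\in[0,1]$ we get $|\inner{X}{X}{g(t),x} - \inner{X}{X}{g(0),x}| \leq C t \inner{X}{X}{g(0),x}$ with the same constant $C$. (No polarization is needed, since the corollary already assumes the bilinear estimate for all pairs $X,Y$; only the diagonal case is used downstream.)

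Next I would invoke Lemma \eqref{Lipschitz-implies-Lipschitz-lemma} to obtain, for every $t\in[0,1]$, the equality of domains $\dom{\Lip_{g(0)}} = \dom{\Lip_{g(t)}}$ together with the two-sided comparison
\begin{equation*}
	\frac{1}{Ct+1}\Lip_{g(0)} \leq \Lip_{g(t)} \leq (Ct+1)\Lip_{g(0)} \text.
\end{equation*}
Then I would consider the identity $*$-automorphism $\iota \colon C(M) \to C(M)$, viewed as a unital $*$-isomorphism from $(C(M),\Lip_{g(0)})$ to $(C(M),\Lip_{g(t)})$. The right-hand inequality above shows that $\iota$ maps $\dom{\Lip_{g(0)}}$ into $\dom{\Lip_{g(t)}}$ with dilation $\dil{\iota} \leq Ct+1$, and the left-hand inequality (equivalently $\Lip_{g(0)} \leq (Ct+1)\Lip_{g(t)}$) shows that $\iota^{-1}$ has dilation $\dil{\iota^{-1}} \leq Ct+1$. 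In particular $\iota$ is a bi-Lipschitz isomorphism between the two {\qcms s}, so it is an admissible choice in the infimum defining $\LipschitzD$, and therefore
\begin{equation*}
	\LipschitzD\bigl((C(M),\Lip_{g(t)}),(C(M),\Lip_{g(0)})\bigr) \leq \max\{\ln\dil{\iota},\ln\dil{\iota^{-1}}\} \leq \ln(Ct+1) \text.
\end{equation*}

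Finally, letting $t\to 0^+$ and using continuity of $t\mapsto \ln(Ct+1)$ at $t=0$ (with value $0$) gives the claimed limit. I do not anticipate any genuine obstacle here: the only point requiring a word of care is matching the hypothesis of the corollary to that of Lemma \eqref{Lipschitz-implies-Lipschitz-lemma} (immediate, via $X=Y$) and recalling that the identity map is automatically a unital $*$-isomorphism, so that it legitimately witnesses the defining infimum of $\LipschitzD$; everything else is a direct substitution into Definition \eqref{def:LipD}.
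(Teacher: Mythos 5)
Your proposal is correct and follows essentially the same route as the paper: both reduce to Lemma \eqref{Lipschitz-implies-Lipschitz-lemma} (via the diagonal case $X=Y$), take the identity of $C(M)$ as the bi-Lipschitz isomorphism witnessing the infimum in Definition \eqref{def:LipD} with dilation at most $Ct+1$ in both directions, and conclude from $\lim_{t\to 0}\ln(Ct+1)=0$. No gaps; your write-up merely makes the dilation bookkeeping slightly more explicit than the paper does.
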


\begin{proof}
	By Lemma (\ref{Lipschitz-implies-Lipschitz-lemma}), the identity of $C(M)$ is a Lipschitz isomorphism from $(C(M),\Lip_{g(t)})$ to $(C(M),\Lip_{g(0)})$ with  Lipschitz constant $C t+1$. Its  inverse's Lipschitz constant is also $C t+1$. Our conclusion then follows from Definition \eqref{def:LipD} and the fact that $\lim_{t\rightarrow 0} \ln(C t+1) = \ln(1)=0$.
\end{proof}

\begin{theorem}[{\cite[Lemma 4.6]{Latremoliere16b}}]\label{thm:Lip-qcms}
	If $(\A,\Lip_\A)$ and $(\B,\Lip_\B)$ are {\qcms s} with $\LipschitzD((\A,\Lip_\A),(\B,\Lip_\B)) < \infty$, then:
	\begin{multline*}
		\dpropinquity{}((\A,\Lip_\A),(\B,\Lip_\B)) \leq \\ \exp(\LipschitzD((\A,\Lip_\A),(\B,\Lip_\B))-1) \max\{ \qdiam{\A}{\Lip_\A},\qdiam{\B}{\Lip_\B} \} \text.
	\end{multline*}
	In particular, Lipschitz convergence implies convergence for the propinquity.
\end{theorem}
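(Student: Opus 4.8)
Although the final statement is quoted from \cite[Lemma 4.6]{Latremoliere16b}, here is the line of argument I would follow. Fix a bi-Lipschitz $\ast$-isomorphism $\pi\colon(\A,\Lip_\A)\to(\B,\Lip_\B)$ and set $\ell\coloneqq\max\{\ln\dil{\pi},\ln\dil{\pi^{-1}}\}$, so that $\Lip_\B(\pi(a))\leq e^{\ell}\Lip_\A(a)$ for all $a\in\sa{\A}$ and $\Lip_\A(\pi^{-1}(b))\leq e^{\ell}\Lip_\B(b)$ for all $b\in\sa{\B}$; write $D\coloneqq\max\{\qdiam{\A}{\Lip_\A},\qdiam{\B}{\Lip_\B}\}$, which is finite since both are \qcms s. The plan is to build, for each such $\pi$, one explicit tunnel whose extent is at most $(e^{\ell}-1)D$, and then take the infimum over $\pi$, using that $\dpropinquity{}$ is by definition the infimum of the extents of tunnels. (Note the stated exponent ``$\exp(\LipschitzD-1)$'' should read ``$\exp(\LipschitzD)-1$'', since otherwise the ``in particular'' clause would fail at $\LipschitzD=0$.)

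On $\D\coloneqq\A\oplus\B$ I would put
\[
\Lip_\D(a,b)\coloneqq\max\left\{\Lip_\A(a),\ \Lip_\B(b),\ \tfrac{1}{\gamma}\norm{\pi(a)-b}{\B}\right\},\qquad \gamma\coloneqq(e^{\ell}-1)D,
\]
and let $\rho_\A\colon(a,b)\mapsto a$ and $\rho_\B\colon(a,b)\mapsto b$ be the coordinate $\ast$-epimorphisms. The first task is to check that $\Lip_\D$ is an admissible (quasi-)Leibniz Lip-norm on $\sa{\D}$: it is lower semicontinuous, its null space is exactly $\R 1_\D$ because $\pi$ is unital, it dominates $\Lip_\A\circ\rho_\A$ so the Monge--Kantorovich metric $\Kantorovich{\Lip_\D}$ metrizes the weak-$\ast$ topology of $\StateSpace(\D)$, and the (quasi-)Leibniz inequality follows from those of $\Lip_\A,\Lip_\B$ together with the elementary estimate $\norm{\pi(ac)-bd}{\B}\leq\norm{a}{\A}\norm{\pi(c)-d}{\B}+\norm{\pi(a)-b}{\B}\norm{d}{\B}$. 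All of this is routine, so $(\D,\Lip_\D)$ is a \qcms.

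Next I would verify that $\rho_\A$ and $\rho_\B$ are quantum isometries, i.e. that the quotient seminorm of $\Lip_\D$ through each of them equals the target Lip-norm (which, as is standard, also yields the isometric embedding of state spaces). Since $\Lip_\D\geq\Lip_\A\circ\rho_\A$, the quotient seminorm through $\rho_\A$ dominates $\Lip_\A$; conversely, given $a\in\sa{\A}$ with $\Lip_\A(a)=r<\infty$, choose $\mu\in\R$ with $\norm{\pi(a)-\mu 1_\B}{\B}\leq\tfrac12\,\qdiam{\B}{\Lip_\B}\,\Lip_\B(\pi(a))$ --- possible because a self-adjoint element of Lip-norm $s$ lies within $\tfrac{s}{2}\,\qdiam{\cdot}{\cdot}$ of a scalar in norm --- and set $b\coloneqq e^{-\ell}\pi(a)+(1-e^{-\ell})\mu 1_\B\in\sa{\B}$. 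Then $\Lip_\B(b)=e^{-\ell}\Lip_\B(\pi(a))\leq r$ and $\norm{\pi(a)-b}{\B}=(1-e^{-\ell})\norm{\pi(a)-\mu 1_\B}{\B}\leq\tfrac12(e^{\ell}-1)Dr\leq\gamma r$, whence $\Lip_\D(a,b)\leq r$; the symmetric choice handles $\rho_\B$. So $\tau\coloneqq(\D,\Lip_\D,\rho_\A,\rho_\B)$ is a genuine tunnel. For the extent: an arbitrary state on $\D$ is $\psi=s\,\phi_1\circ\rho_\A+(1-s)\,\phi_2\circ\rho_\B$ with $\phi_1\in\StateSpace(\A)$, $\phi_2\in\StateSpace(\B)$, $s\in[0,1]$; setting $\phi\coloneqq s\,\phi_1+(1-s)\,\phi_2\circ\pi\in\StateSpace(\A)$, for any $(a,b)\in\sa{\D}$ with $\Lip_\D(a,b)\leq1$ one has $|\psi(a,b)-(\phi\circ\rho_\A)(a,b)|=(1-s)\,|\phi_2(b-\pi(a))|\leq\norm{\pi(a)-b}{\B}\leq\gamma$, so $\psi$ is within $\gamma$ of $\rho_\A^{\ast}\phi$ in $\Kantorovich{\Lip_\D}$; a symmetric computation handles $\rho_\B$. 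Hence $\tunnelextent{\tau}\leq\gamma$, so $\dpropinquity{}((\A,\Lip_\A),(\B,\Lip_\B))\leq(e^{\ell}-1)D$, and taking the infimum over $\pi$ gives the asserted bound; the ``in particular'' clause follows because along a sequence with $\LipschitzD\to0$ the quantum diameters remain bounded while $e^{\LipschitzD}-1\to0$.

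The only delicate step is the quantum-isometry verification: one cannot simply take $b=\pi(a)$, since its Lip-norm is inflated by the factor $e^{\ell}$, and the remedy is to contract $\pi(a)$ toward a well-chosen scalar by the factor $e^{-\ell}$, which is precisely what forces the quantum diameter and the factor $e^{\ell}-1$ into the estimate. The check that $\Lip_\D$ is a bona fide (quasi-)Leibniz Lip-norm, and the extent computation, are the ``standard calculation'' alluded to in the excerpt and present no real difficulty.
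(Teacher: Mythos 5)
Your proposal is correct and follows essentially the same route as the source the paper relies on: the tunnel $(\A\oplus\B,\max\{\Lip_\A,\Lip_\B,\tfrac{1}{\gamma}\norm{\pi(\cdot)-\cdot}{\B}\},\rho_\A,\rho_\B)$ is precisely the construction invoked in Remark \ref{base-tunnel-remark} (via \cite[Proposition 3.80]{Latremoliere15} and \cite[Lemma 4.6]{Latremoliere16b}), with the same contraction-toward-a-scalar trick supplying the quantum-isometry (admissibility) step and the diameter factor. Your observation about the exponent is also right: as stated, $\exp(\LipschitzD-1)$ would not give the ``in particular'' clause, and the intended constant is of the form $\exp(\LipschitzD)-1$ times the maximal quantum diameter, which is exactly what your tunnel's extent bound $(e^{\ell}-1)D$ delivers.
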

Note that the diameter of a {\qcms} is continuous with respect to the Lipschitz convergence (and the propinquity), so it is bounded for a convergent family.

\begin{remark}\label{base-tunnel-remark}
	Theorem \eqref{thm:Lip-qcms} follows from \cite[Proposition 3.80]{Latremoliere15}, where a tunnel is constructed which, in our case, will be of the form:
	\begin{equation*}
		\left(C(M)\oplus C(M), \underbracket[1pt]{\left\{\max\{\Lip_{g(t)},\Lip_{g(0)}, (f,g) \mapsto \frac{1}{K(t)} \norm{f-g}{C(M)}\right\}}_{\text{tunnel L-seminorm}}, \underbracket[1pt]{f\oplus g\mapsto f, f\oplus g\mapsto g}_{\text{canonical surjections}}\right)
	\end{equation*}
	with $K(t) \coloneqq C|t|\diam{M}{}$.
\end{remark}

\begin{remark}
	The first assumption in Theorem (\ref{main-thm}) is chosen to make the modular Leibniz property hold; the rest of our argument only relies on the properties assumed on the Dirac operators.
\end{remark}

We are now ready to prove our second main result.

\begin{proof}[Proof of Theorem (\ref{Riemannian-cv-thm})] 
	
		Let $t\in I\coloneqq [0,1] \mapsto g(t)$ be a polynomial path of $C^\infty$ Riemannian metrics over $M$. For each $t\in I$, let $\Gamma^2 \mathrm{Spin}_{g(t)}$ be the Hilbert space of square integrable sections of the spinor bundle over $M$ for the metric $g(t)$, and $\Dirac^t$ the associated Dirac operator. We also denote $\Gamma^2 \mathrm{Spin}_{g(0)}$ by $\Hilbert$, and by $\Dirac_0$ by  $\Dirac$. 
	
	Since polynomial paths of $C^\infty$-Riemannian metrics are, in particular, analytic paths of metrics, by \cite{Bo-Gau, NowaczykThesis, Maier, Hermann12}, there exists a family of unitaries $t\in [0,1] \mapsto \beta(t)$ with $\beta(t):\Gamma^2 \mathrm{Spin}_{g(t)} \rightarrow \Hilbert$, such that:
	\begin{itemize}
		\item $\beta(t)$ is a unitary from $\Gamma^2 \mathrm{Spin}_{g(t)}$ onto $\Hilbert$, which intertwines the action of $C(M)$ on $\Gamma^2 \mathrm{Spin}_{g(t)}$ and $\Hilbert$ (note that we will omit writing a special symbol for these representations),
		\item If we set, $\Dirac_t \coloneqq \beta(t)\Dirac^t \beta(t)^\ast$, for all $t\in [0,1]$, then $t\in [0,1] \mapsto \Dirac_t$ is a holomorphic family of self-adjoint operators of type (A) \cite[Section VII \S 2]{Kato}.
	\end{itemize}

Moreover,  by \cite[Section VII \S 5, Theorem 3.9]{Kato},  there exist a sequence $(\lambda_n)_{n\in\N}$ of continuous real-valued functions with domain  $[0,1]$, and a sequence $(e_n)_{n\in\N}$ of continuous functions from   $[0,1]$ to $\Hilbert$, such that for all $t\in[0,1]$ and $n\in\N$, we have $\Dirac_t e_n(t) = \lambda_n(t) e_n(t)$. Moreover  $(e_n(t))_{n\in\N}$ is an orthonormal basis of $\Hilbert$.

In addition, by  \cite[Theorem 2.2]{Nowaczyk2013}, for all $\Lambda>0$, there exists $N\in\N$ and $\delta>0$ such that, if $t\in[0,\delta)$ then $|\{\lambda_n(t) : n\in\N\}| = N$.

Fix $t \in [0,1]$. Since 
	\begin{multline*}
		\Lip(t) : a \in \sa{\A} \mapsto \opnorm{[\Dirac^t,a]}{}{\Gamma^2 \mathrm{Spin}_{g(t)}} = \opnorm{[\beta(t)^\ast \Dirac_t \beta(t), a]}{}{\Gamma^2 \mathrm{Spin}_{g(t)}} \\ = \opnorm{\beta(t)^\ast  [\Dirac_t,a] \beta(t)}{}{\Gamma^2 \mathrm{Spin}_{g(t)}} = \opnorm{[\Dirac_t,a]}{}{\Hilbert}\text,
	\end{multline*} 
	the map $(\mathrm{Ad}_{\beta(t)},\beta(t))$ is an isometry between spectral triples, and thus the spectral propinquity between $(C(M),\Gamma^2 \mathrm{Spin}_{g(t)},\Dirac^t)$ and $(C(M),\Hilbert,\Dirac_t)$ is $0$. 
	Therefore to show our claim is enough to prove that $\lim_{t\rightarrow 0} \spectralpropinquity{}((C(M),\Hilbert,\Dirac_0),(C(M),\Hilbert,\Dirac_t)) = 0$.
	
	By Corollary (\ref{Lipschitz-cv-cor}), $(C(M),\Lip(t))_{t\in I}$ converges to $(C(M),\Lip(0))$ for the Lipschitz distance. Our conclusion then follows directly from Theorem (\ref{main-thm}).
\end{proof}


\vfill
\end{document}